\newtheorem{definition}[]{Definition} 
\newtheorem{theorem}[definition]{Theorem}
\newtheorem{proposition}[definition]{Proposition}
\newtheorem{remark}[definition]{Remark}
\theoremstyle{definition}
\def\CM{{\mathcal M}}
\title[D. Baraglia and  L.P. Schaposnik]{Real structures on moduli spaces of Higgs bundles}
\author{David Baraglia and Laura P. Schaposnik }
\address{School of Mathematical Sciences, The University of Adelaide, SA 5005 Australia.}
\email{david.baraglia@adelaide.edu.au}
\address{Department of Mathematics, University of Illinois, Urbana, IL 61801, USA}
\email{schapos@illinois.edu}
\begin{document}

\baselineskip=1.1\baselineskip

\begin{abstract}
We construct triples of commuting real structures on the moduli space of Higgs bundles, whose fixed loci are branes of type $(B,A,A)$, $(A,B,A)$ and $(A,A,B)$. We study the real points through the associated spectral data and describe the topological invariants involved using $KO$, $KR$ and equivariant $K$-theory.
\end{abstract}

\maketitle


\section{Introduction}\label{sec:intro}

The moduli space $\mathcal{M}_G(\Sigma)$ of $G$-Higgs bundles on a compact Riemann surface $\Sigma$ is the space of solutions to the gauge theoretic Hitchin equations on the  surface, where $G$ is a complex reductive Lie group. The smooth locus of $\mathcal{M}_G(\Sigma)$ is a hyperk\"ahler manifold, so there are complex structures $I,J,K$ obeying the same relations as the imaginary quaternions. This paper is concerned with the study of several naturally defined real structures on this moduli space. In fact, as it is impossible for an involution to be anti-holomorphic in all three complex structures it is natural to consider not one but three real structures simultaneously. We introduce a naturally defined triple of commuting real structures $i_1,i_2,i_3$ on $\mathcal{M}_G(\Sigma)$. We give geometric interpretations for these real structures and use spectral data to build up a detailed picture of their fixed point sets. Along the way we encounter various forms of $K$-theory as a convenient tool for studying the connected components of these fixed point sets.\\

The real structures $i_1,i_2,i_3$ are defined in Section \ref{sec:real}. The involution $i_1$ is defined by taking a real form $G^\sigma$ of $G$. Amongst the fixed points of $i_1$ are solutions to the Hitchin equations with holonomy in the real form $G^\sigma$. Moreover, when $G^\sigma$ is the split real form, we prove in Theorem \ref{proporder2} that the fixed points of $i_1$ are points of order $2$ in the fibres of  the Hitchin fibration, as seen in \cite{thesis}. The involution $i_2$, introduced in \cite{BS13} is defined by choosing a real structure on $\Sigma$, an anti-holomorphic involution $f : \Sigma \to \Sigma$. Amongst the fixed points of $i_2$ are representations of the orbifold fundamental group of the action of $f$ on $\Sigma$. Combining these two involutions we obtain a third involution $i_3 = i_1 \circ i_2$. Amongst the fixed points of $i_3$ are pseudo-real Higgs bundles, introduced in \cite{biswas2}. The involutions $i_2,i_3$ are well-behaved with respect to the Hitchin fibration and by restriction we find that their fixed point sets are real integrable systems. We also give a description of the fixed point sets in terms of orbifold representations in Section \ref{secrep}.\\

Section \ref{sec:gl} considers in detail the case of the general linear group $G = GL(m,\mathbb{C})$, with real form $GL(m,\mathbb{R}) \subset GL(m,\mathbb{C})$. In this case a Higgs bundle $(V,\Phi)$ is a rank $m$ holomorphic vector bundle $V$ and a holomorphic $(1,0)$-form valued endomorphism $\Phi$ of $V$. If ${\rm deg}(V)=0$, then as we recall in the paper polystable Higgs bundles $(V,\Phi)$ correspond to bundles with flat connection $(V,\nabla)$, where $\nabla$ has reductive holonomy. We say that $\nabla$ is simple if the only constant endomorphisms of $V$ are multiples of the identity. Restricting to simple, reductive holonomy we find:

\begin{itemize}
\item{Fixed points of $i_1$ are flat bundles with holonomy in $GL(m,\mathbb{R})$ or $GL(m/2,\mathbb{H})$.}
\item{Fixed points of $i_2$ are flat bundles with involution $\varphi : V \to V$ covering $f$ and preserving $\nabla$.}
\item{Fixed points of $i_3$ are flat bundles with anti-linear isomorphism $\varphi : V \to V$, covering $f$, preserving $\nabla$ and with $\varphi^2 = \pm 1$.}
\end{itemize}

As a first step towards identifying the connected components of the fixed point sets, we may consider the topological data associated to the underlying bundle as follows:
\begin{itemize}
\item{For $i_1$, the bundle $V$ carries a real or quaternionic structure, thus defines a class $[V]$ in $KO^0(\Sigma)$ or $KSp(\Sigma)$, real or quaternionic $K$-theory.}
\item{For $i_2$, the bundle $V$ carries a lift of the $\mathbb{Z}_2$-action on $\Sigma$, thus defines a class $[V] \in K^0_{\mathbb{Z}_2}(\Sigma)$ in $\mathbb{Z}_2$-equivariant $K$-theory.}
\item{For $i_3$, the bundle $V$ carries a real or quaternionic structure in the sense of Atiyah \cite{A66}, hence a class $[V]$ in $KR^0(\Sigma)$ or $KH^0(\Sigma)$.}
\end{itemize}

In Section \ref{sec:gl} we determine these $K$-theory groups and show that the $K$-theory classification is sharp in the sense that one can recover the bundle plus additional topological data up to isomorphism from the $K$-theory class.\\

As an example of the utility of these $K$-theory classes, we consider the following construction. Let $\overline{\Sigma} = \Sigma \times [-1,1]$ with involution $\tau(x,t) = (f(x),-t)$. The quotient $M = \overline{\Sigma}/\tau$ is a $3$-manifold with boundary $\partial M = \Sigma$. Then, we show the following:\\

\noindent \textbf{Theorem \ref{3man}} \textit{Let $(V,\nabla)$ be a fixed point of $i_3$ with simple holonomy. Then $\nabla$ extends over $M$ as a flat connection if and only if the class $[V] \in \tilde{K}^0_{\mathbb{Z}_2}(\Sigma)$ in reduced equivariant $K$-theory is trivial.}\\

In Section \ref{sec:spectral} we study the spectral data associated to fixed points. As we recall, spectral data consists of a branched cover $p : S \to \Sigma$ called the spectral curve and a line bundle $L \to S$. The bundle $V$ and Higgs field $\Phi$ are recovered by pushing down $L$ to $\Sigma$. For fixed points, the additional structure on $V$ determines similar structure on the spectral line $L$ and we recover the $K$-theory classes as push-forwards from the spectral curve to $\Sigma$. The push-forward in the $KO$-theory case was recently used by Hitchin for this purpose \cite{Char}. We complement this by determining explicit expressions for the push-forwards in equivariant $K$-theory and $KR$-theory in Theorems \ref{prop:eqpush} and \ref{prop:krpush}. The image of the push-forward maps $p_* : K_{\mathbb{Z}_2}^0(S) \to K_{\mathbb{Z}_2}^0(\Sigma)$ and $p_* : KR^0(S) \to KR^0(\Sigma)$ describe which topological classes of bundle with real structure can be given a reductive flat connection (with smooth spectral curve). This is the analogue of the Milnor-Wood type inequalities for the topological invariants obtained through $K$-theory.\\

Along the paper we adopt the physicists' language in which a Lagrangian submanifold is called an {\em A-brane} and a complex submanifold a {\em B-brane}. A submanifold of a hyperk\"ahler manifold may be of type $A$ or $B$ with respect to each of the complex structures and we may speak of branes of type $(B,B,B), (B,A,A), (A,B,A)$ and $(A,A,B)$. Under this classification the fixed point sets of $i_1,i_2,i_3$ are branes of type $(B,A,A),(A,B,A)$ and $(A,A,B)$ respectively. The main reason for considering branes is the connection to mirror symmetry and the geometric Langlands program. This program asserts that the moduli spaces $\mathcal{M}_G(\Sigma) , \mathcal{M}_{^L G}(\Sigma)$ are in duality, where $^L G$ is the Langlands dual group of $G$. According to this duality, specifically homological mirror symmetry, there should be an equivalence of categories of branes on $\mathcal{M}_G(\Sigma)$ and $\mathcal{M}_{^L G}(\Sigma)$ under which there are exchanges $(B,B,B) \leftrightarrow (B,A,A)$, $(A,B,A) \leftrightarrow (A,B,A)$ and $(A,A,B) \leftrightarrow (A,A,B)$. We conclude our work with Section \ref{sec:duality}, in which we speculate on how this duality acts on the fixed point sets of $i_1,i_2,i_3$.


\vspace{0.1 in}

\noindent{\bf \ Acknowledgements:} The authors would like to thank S. Bradlow, O. Garc\'ia-Prada, N. Hitchin, and F. Schaffhauser for helpful comments.

\section{Higgs bundles}
We review Higgs bundles, the hyperk\"ahler structure of their moduli space and recall the Hitchin fibration. 

\subsection{The moduli space of $G$-Higgs bundles}

Let $\Sigma$ be a Riemann surface of genus $g > 1$ with canonical bundle $K$, and $G$ a complex Lie group with Lie algebra $\mathfrak{g}$. We shall assume throughout the paper that $G$ is reductive. Given a  principal $G$-bundle $P$ on $\Sigma$, we let $\mathfrak{g}_P$ denote the corresponding adjoint bundle. A $G$-Higgs bundle on a Riemann surface $\Sigma$ is a pair $(\overline{\partial}_A,\Phi)$, where $\overline{\partial}_A$ is a holomorphic structure on a principal $G$-bundle $P$ and $\Phi$ is a holomorphic section of $\mathfrak{g}_P \otimes K$. In the case of $G = GL(m,\mathbb{C})$ a $G$-Higgs bundle is equivalent to a classical Higgs bundle $(V,\Phi)$, consisting of a rank $m$ holomorphic bundle $V$ and a holomorphic map $\Phi : V \to V \otimes K$.\\

In order to define a moduli space of such pairs we shall recall the notions of stability and $S$-equivalence. Let $\mu(V)= \deg(V)/{\rm rk}(V)$ be the slope of the vector bundle $V$. We say that a $GL(m,\mathbb{C})$-Higgs bundle $(V,\Phi)$ is semi-stable if for every subbundle $W\subset V$ such that $\Phi(W)\subset W\otimes K$ we have $\mu(W)\leq \mu(V)$, and it is stable if one has a strict inequality. If one can write $(V,\Phi)=(V_1,\Phi_1)\oplus\ldots \oplus (V_k,\Phi_k) $ for $(V_i,\Phi_i)$ stable pairs such that $\mu(V_i)= \mu(V)$, then we say the Higgs bundle is polystable. To define $S$-equivalence consider a strictly semi-stable Higgs bundle $(V, \Phi)$. As it is not stable, $V$ admits a subbundle $W\subset V$ of the same slope which is preserved by $\Phi$. If $W$ is a subbundle of $V$ of least rank and same slope which is preserved by $\Phi$, it follows that the pair $(W,\Phi)$ is stable.  Then, by induction one obtains a flag of subbundles
$W_{0}=0\subset W_{1}\subset \ldots \subset W_{r}=V$
where $\mu(W_{i}/W_{i-1})=\mu(V)$ for $1\leq i\leq r$, and where the induced Higgs bundles $(W_{i}/W_{i-1}, \Phi_{i})$ are stable. This is the \textit{Jordan-H\"{o}lder filtration} of $V$, and it is not unique.  However the graded object
\[{\rm Gr}(V,\Phi):=\bigoplus_{i=1}^{r}(W_{i}/W_{i-1},\Phi_{i})\]
is unique up to isomorphism. 
 Two semi-stable Higgs bundles $(V,\Phi)$ and $(V',\Phi')$ are said to be $S$-equivalent if ${\rm Gr}(V,\Phi)\cong {\rm Gr}(V',\Phi')$. For a stable pair $(V,\Phi)$ the associated graded object coincides with $(V,\Phi)$ and the $S$-equivalence class is just the isomorphism class of the original pair. More generally each $S$-equivalence class contains a unique polystable object.
 
 Through the above definitions, one may construct the moduli space $\mathcal{M}_m^d$ of $S$-equivalence classes of classical semi-stable Higgs bundles of rank $m$ and degree $d$, or equivalently, the moduli space of polystable rank $m$ degree $d$ Higgs bundles. This space is a quasi-projective scheme of complex dimension $2m^2(g-1) + 2$ and contains an open subscheme ${\mathcal{M}'}_m^d$ corresponding to the moduli scheme of stable pairs \cite{nit}. When $m$ and $d$ are coprime semi-stable implies stable and the moduli space $\mathcal{M}_m^d$ is smooth.\\

By extending the stability notions to $G$-Higgs bundles, one can define stable, semi-stable and polystable $G$-Higgs bundles (see e.g., \cite{biswas}, \cite{bgm}). Then one can construct a corresponding moduli space of polystable $G$-Higgs bundles, denoted $\mathcal{M}_{G}$ or $\mathcal{M}_G(\Sigma)$. The dimension of $\mathcal{M}_G$ is $2 {\rm dim}(G) (g-1)$. The connected components of $\mathcal{M}_G$ are in bijection with isomorphism classes of principal $G$-bundles and these are parametrised by $\pi_1(G)$ \cite{dopa}. For $d \in \pi_1(G)$ we let $\mathcal{M}_G^d$ denote the corresponding connected component of $\mathcal{M}_G$.


\subsection{Hyperk\"ahler structure on $\mathcal{M}_G$}\label{sec:hyper}

We shall briefly recall here the construction of a hyperk\"ahler metric on $\mathcal{M}_G$, obtained by an infinite dimensional hyperk\"ahler reduction. For simplicity we consider the case where $G$ is semi-simple. The reductive case requires only minor alterations such as modifying the Hitchin equations to allow for projectively flat connections.

Fix a hermitian metric $g$ on $\Sigma$ and an anti-holomorphic involution $\rho : G \to G$ whose fixed point set $G^\rho$ gives the compact real form of $G$. Let $P$ be a principal $G$-bundle and fix a reduction of structure to $G^\rho$. The reduction of structure determines a corresponding anti-linear involution $\rho : \mathfrak{g}_P \to \mathfrak{g}_P$ on the adjoint bundle. Given a section $x$ of  $\mathfrak{g}_P$ we write $x^*$ for $-\rho(x)$. We shall denote by $k( \, , \, )$  the Killing form on $\mathfrak{g}$.\\

The space $\mathcal{A}$ of holomorphic structures on $P$ is an affine space over $\Omega^{0,1}(\Sigma, \mathfrak{g}_P)$, hence the cotangent bundle $T^*\mathcal{A}$ is  an infinite dimensional flat hyperk\"ahler manifold. The tangent space to $T^*\mathcal{A}$ at any point can be  naturally identified with the direct sum  $\Omega^{0,1}(\Sigma , \mathfrak{g}_P  ) \oplus \Omega^{1,0}(\Sigma , \mathfrak{g}_P )$ and we shall denote by $(\Psi_i, \Phi_i)$ tangent vectors to this space. In terms of this identification the metric on $T^*\mathcal{A}$ is given by
\begin{equation*}
g( (\Psi_1,\Phi_1) , (\Psi_1 , \Phi_1) ) = 2i \int_{\Sigma} k( \Psi_1^* , \Psi_1) - k( \Phi_1^* , \Phi_1).
\end{equation*}
Moreover there are compatible complex structures $I,J,K$ defined by
\begin{equation*}
\begin{aligned}
 I (\Psi_1,\Phi_1) &= (i\Psi_1,i\Phi_1), \\
J (\Psi_1,\Phi_1) &= (i\Phi_1^*,-i\Psi_1^*), \\
K (\Psi_1,\Phi_1) &= (-\Phi_1^*,\Psi_1^*),
\end{aligned}
\end{equation*}
satisfying the usual quaternionic relations. This defines the hyperk\"ahler structure on $T^*\mathcal{A}$. We shall denote by $\omega_I,\omega_J,\omega_K$ the corresponding K\"ahler forms
\begin{equation*}
\omega_I(X,Y) := g(IX,Y)~,~\omega_J(X,Y) := g(JX,Y)~,~ \omega_K(X,Y) := g(KX,Y). 
\end{equation*}

For a pair $(\overline{\partial}_A , \Phi) \in T^*\mathcal{A}$ let $\nabla_A = \partial_A + \overline{\partial}_A = \rho(\overline{\partial}_A) + \overline{\partial}_A$ be the associated unitary connection and $F_A$ the curvature of $\nabla_A$. The group $\mathcal{G}^\rho$ of unitary gauge transformations acts on $T^*\mathcal{A}$ preserving the hyperk\"ahler structure. This action has a hyperk\"ahler moment map $\mu( \overline{\partial}_A , \Phi ) = ( F_A + [\Phi , \Phi^*] , \overline{\partial}_A \Phi )$. The hyperk\"ahler quotient $\mu^{-1}(0)/ \mathcal{G}^\rho$ of this action is then the moduli space of solutions to the Hitchin equations
\begin{equation}\label{Heq}
F_A + [\Phi , \Phi^*] = 0, \; \; \; \;  \overline{\partial}_A \Phi = 0,
\end{equation}
modulo unitary gauge transformations. From this we obtain a hyperk\"ahler structure on the smooth points of the moduli space of solutions to the Hitchin equations.\\

For $G$ semi-simple it is a result of Hitchin \cite{N1} and Simpson \cite{simpson88} that a $G$-Higgs bundle $(\overline{\partial}_A,\Phi)$ is gauge equivalent to a solution of the Hitchin equations (\ref{Heq}) if and only if it is polystable. This is used to establish an isomorphism between the moduli space $\mathcal{M}_G$ of polystable $G$-Higgs bundle and the moduli space of solutions to the Hitchin equations. In particular this gives a hyperk\"ahler structure on the smooth points of $\mathcal{M}_G$.\\

A solution to the above Hitchin equations (\ref{Heq}) defines an associated flat $G$-connection $\nabla = \nabla_A + \Phi + \Phi^*$. From the results of Donaldson \cite{donald} and Corlette \cite{cor}, the mapping $(\overline{\partial}_A , \Phi) \mapsto \nabla$ gives an isomorphism between the Higgs bundle moduli space $\mathcal{M}_{G}$ and the character variety ${\rm Hom}^+( \pi_1(\Sigma) , G)/G$ of reductive representations of $\pi_1(\Sigma)$ in $G$ (the definition of reductive representations and further details are recalled in Section \ref{secrep}). We say that a polystable Higgs bundle $(\overline{\partial}_A , \Phi)$ is simple if the only covariant constant gauge transformations of the associated connection $\nabla$ are those valued in the centre $Z(G)$ of $G$.  In particular, one has that simple Higgs bundles give smooth points on the moduli space \cite{ric}.\\

The hyperk\"ahler quotient construction carries over to the case of a reductive group requiring only a small modification. Consider for example the case $G = GL(m,\mathbb{C})$. The Hitchin equations for a Higgs bundle pair $(V,\Phi)$ should be modified to 
\begin{equation*}
F_A + [\Phi , \Phi^*] = -2 \pi i \mu(V) vol_\Sigma, \; \; \; \;  \overline{\partial}_A \Phi = 0,
\end{equation*}
where $\mu(V)$ is the slope of $V$ and $vol_\Sigma$ the volume form on $\Sigma$. The associated connection $\nabla = \nabla_A + \Phi + \Phi^*$ is now only projectively flat and we obtain a representation of a central extension of $\pi_1(\Sigma)$.


\subsection{The Hitchin fibration}

The moduli space $\CM_{G}$ has a natural complex Lagrangian fibration over a vector space $\mathcal{A}_G$. To define this fibration let $p_1, \dots , p_k$ be a homogeneous basis for the algebra of invariant polynomials on $\mathfrak{g}$, of degrees $d_1, \dots , d_k$. Following \cite{N2}, the Hitchin fibration is given by
\begin{eqnarray*}
 h~:~ \mathcal{M}_{G}&\longrightarrow&\mathcal{A}_{G}:=\bigoplus_{i=1}^{k}H^{0}(\Sigma,K^{d_{i}}),\\
 (\overline{\partial}_A,\Phi)&\mapsto& (p_{1}(\Phi), \ldots, p_{k}(\Phi)).
\end{eqnarray*}
The map $h$, referred to as the Hitchin map, is a proper map for any choice of basis (see \cite[Section 4]{N2} for details). Given $d \in \pi_1(G)$ consider the restricted Hitchin map $h : \mathcal{M}_G^d \to \mathcal{A}_G$. For each component $\mathcal{M}_G^d$ the smooth fibres of $h$ are connected \cite{dopa} complex Lagrangian submanifolds with respect to the holomorphic symplectic form $\Omega_I = \omega_J + i\omega_K$. We have ${\rm dim}( \mathcal{A}_G) ={\rm dim}(  \mathcal{M}_{G})/2$, and the Hitchin map gives each component $\mathcal{M}_G^d$ the structure of an algebraically completely integrable system \cite{N2}. In particular $h$ is generically a submersion and the generic fibres are abelian varieties.


\section{Real structures}\label{sec:real}

Having defined the moduli space $\mathcal{M}_{G}(\Sigma)$ of $G$-Higgs bundles on a compact Riemann surface $\Sigma$, we shall now consider the three natural involutions $i_1,i_2$ and $i_3$ on it and study their fixed point sets. 

\subsection{The three involutions}

The moduli space $\mathcal{M}_{G}(\Sigma)$ admits several distinct real structures, a phenomenon related to its hyperk\"ahler geometry. First consider a real form of $G$, given by the fixed point set $G^\sigma$ of an anti-holomorphic involution $\sigma : G \to G$. For any real form $\sigma$ we can find an anti-holomorphic involution $\rho : G \to G$ commuting with $\sigma$, whose fixed point set defines the compact real form of $G$ (for details, see \cite{helga}). The Cartan involution of the real form $G^\sigma$ is the holomorphic involution $\theta = \rho \circ \sigma$. From $\sigma$ we obtain an involution $i_1$ on the Moduli space of Higgs bundles, given by
\begin{equation*}
i_1(\bar \partial_A, \Phi)=(\theta(\bar \partial_A),-\theta( \Phi)).
\end{equation*}
The involution $i_1$ is an isometry of the hyperk\"ahler metric, holomorphic in $I$ and anti-holomorphic in $J$, $K$. Therefore its fixed point set is a brane of type $(B,A,A)$.\\
 
 A second way to obtain a real structure on $\mathcal{M}_{G}(\Sigma)$ is to consider real structures on the surface $\Sigma$. Let $f : \Sigma \to \Sigma$ be an anti-holomorphic involution, a real structure on $\Sigma$. Viewing $\mathcal{M}_{G}(\Sigma)$ as a moduli space of connections, the action of $f$ on $\Sigma$ induces an involution $i_2$ by pullback of connections. In terms of Higgs bundles, $i_2$ is given by
\begin{equation*}
i_2(\bar \partial_A, \Phi)=(f^*(  \partial_A),f^*( \Phi^*  ))= (f^*(\rho(\bar \partial_A)), -f^*( \rho(\Phi) )).
\end{equation*}
We have seen in \cite{BS13} that $i_2$ is an isometry which is holomorphic in $J$ and anti-holomorphic in $I$, $K$. Therefore its fixed point set is a brane of type $(A,B,A)$.\\

Lastly, by combining a real structure on the group $G$ with a real structure on the surface $\Sigma$, we obtain a third class of involution $i_3$, given by
\begin{equation*}
i_3(\bar \partial_A, \Phi)=(f^* \sigma(\bar \partial_A),f^*\sigma( \Phi)).
\end{equation*}
Since $i_3 = i_1 \circ i_2$ we have that $i_3$ is an isometry, holomorphic in $K$ and anti-holomorphic in $I$, $J$. Therefore its fixed point set is a brane of type $(A,A,B)$.


\subsection{Fixed point sets}

The fixed point sets of $i_1,i_2,i_3$ meet the smooth points of $\mathcal{M}_G$ in complex Lagrangian submanifolds. The case of $i_2$ was established in \cite{BS13} and the same argument applies to the other involutions. The fixed point sets have interpretations in terms of the corresponding holonomy representations. The simplest case are fixed points of $i_2$, which in terms of connections are simply those connections $\nabla$ which are isomorphic to their pullback $f^*\nabla$. For the involution $i_3$ we have fixed points given by pseudo-real Higgs bundles as defined in \cite{biswas2}.

To discuss fixed points of $i_1$, recall that the involutions $\sigma,\rho,\theta$ induce corresponding involutions on the Lie algebra $\mathfrak{g}$. We shall denote by  $\mathfrak{g}^\sigma$ the fixed point set of $\sigma$, the Lie algebra of $G^\sigma$. We obtain a decomposition $\mathfrak{g}^\sigma = \mathfrak{u} \oplus \mathfrak{m}$ of $\mathfrak{g}^\sigma$ into the $\pm 1$-eigenspaces of $\theta|_{\mathfrak{g}^\sigma}$, the Cartan decomposition of $\mathfrak{g}^\sigma$. Let $\mathfrak{u}^c = \mathfrak{u} \oplus i \mathfrak{u}$ and $\mathfrak{m}^c = \mathfrak{m} \oplus i \mathfrak{m}$ be the complexifications of $\mathfrak{u}$ and $\mathfrak{m}$. Let $U$ be the maximal compact subgroup of $G^\sigma$ and $U^c$ its complexification. Higgs bundles with holonomy in the real form $G^\sigma$ are pairs $(\overline{\partial}_A , \Phi)$, where $\overline{\partial}_A$ is a holomorphic structure on a principal $U^c$-bundle $P'$ and $\Phi$ is a holomorphic section of $(P' \times_{U^c} \mathfrak{m}) \otimes K$. Clearly such Higgs bundles are fixed points of $i_1$. In general, there are fixed points of $i_1$ and $i_3$ other than those we have just described. We address the problem of classifying all fixed points in Section \ref{secrep} in terms of special classes of representations.\\

We consider the action of the real structures on the connected components of $\mathcal{M}_G$. For this note that the Cartan involution $\theta : G \to G$ induces an involution $\theta_* : \pi_1(G) \to \pi_1(G)$.
\begin{proposition}
The action of the involutions  $i_1,i_2,i_3$ on the space of connected components  $\pi_0(\mathcal{M}_G) \simeq \pi_1(G)$ is given by
\begin{equation*}
i_1(d) = \theta_*(d), \; \; \; \; i_2(d) = -d, \; \; \; \; i_3(d) = -\theta_*(d),
\end{equation*}
where $d \in \pi_1(G)$.
\end{proposition}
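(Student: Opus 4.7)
The plan is to reduce the proposition to a computation of the action of each involution on the topological type of the underlying principal $G$-bundle. Since the Higgs field $\Phi$ is a section of a bundle associated to $P$, the component in $\pi_0(\mathcal{M}_G)$ of a Higgs bundle $(\bar\partial_A, \Phi)$ depends only on the isomorphism class of $P$. Under the standard identification $\pi_0(\mathcal{M}_G) \cong \pi_1(G)$, this class is recorded by a characteristic class $c(P) \in H^2(\Sigma, \pi_1(G)) \cong \pi_1(G)$, where the second isomorphism is induced by a fixed orientation of $\Sigma$. It therefore suffices to compute how each involution acts on $c(P)$.

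For $i_1(\bar\partial_A, \Phi) = (\theta(\bar\partial_A), -\theta(\Phi))$, the transformation of a \v{C}ech cocycle is $g_{ij} \mapsto \theta(g_{ij})$, so the classifying map $\Sigma \to BG$ is postcomposed with $B\theta$. This induces $\theta_*$ on $\pi_2(BG) = \pi_1(G)$, giving $i_1(d) = \theta_*(d)$. For $i_2$, the transformation factors as a pullback by $f$ followed by an application of $\rho$ to transition data. Since $\rho$ fixes the compact real form $G^\rho$ pointwise and the inclusion $G^\rho \hookrightarrow G$ is a homotopy equivalence, the induced map $\rho_*$ on $\pi_1(G)$ is trivial. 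The topological effect of $i_2$ therefore reduces to that of $f^*$.

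The remaining step is to show that $f^*$ acts as $-1$ on $c(P)$. Since $f$ is anti-holomorphic, it reverses the orientation of the underlying real surface $\Sigma$, so it acts as $-1$ on the fundamental class $[\Sigma] \in H_2(\Sigma, \mathbb{Z})$ and hence as $-1$ on $H^2(\Sigma, \mathbb{Z}) \cong \mathbb{Z}$; by naturality $f^*$ acts as $-1$ on $H^2(\Sigma, \pi_1(G)) \cong \pi_1(G)$ as well, yielding $i_2(d) = -d$. The formula for $i_3$ then follows from $i_3 = i_1 \circ i_2$ and the fact that $\theta_*$ is a group homomorphism: $i_3(d) = \theta_*(-d) = -\theta_*(d)$. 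The main point requiring care in this argument is the orientation-reversal computation underlying $i_2$; once this is in place, everything else is a routine naturality statement about characteristic classes and classifying spaces.
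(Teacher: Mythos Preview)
Your argument is correct and follows essentially the same strategy as the paper: reduce to the action of each involution on the topological type of the underlying principal bundle, identified with a class in $H^2(\Sigma,\pi_1(G))\cong\pi_1(G)$. The paper states the $i_1$ case by twisting the $G$-action on $P$ by $\theta$ (equivalent to your classifying-space formulation), cites \cite{BS13} for the $i_2$ case, and deduces $i_3$ by composition; your proof supplies the orientation-reversal argument for $i_2$ explicitly and observes that $\rho_*$ is trivial on $\pi_1(G)$ via the homotopy equivalence $G^\rho\hookrightarrow G$, so in that respect your version is more self-contained than the paper's.
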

\begin{proof}
Let $P$ be a principal $G$-bundle on $\Sigma$. We define $\theta(P)$ to be the principal bundle diffeomorphic to $P$ but with $G$-action $(p , g) \mapsto p~  \theta(g)$. At the level of isomorphism classes this gives the map $\theta_* : \pi_1(G) \to \pi_1(G)$. From the definition of $i_1$ it is clear that it sends a principal $G$-bundle $P$ to $\theta(P)$. It was shown in \cite{BS13} that the action of $i_2$ on principal $G$-bundles is $d \mapsto -d$. The result for $i_3$ follows since $i_3 = i_1 \circ i_2$.
\end{proof}

When $\sigma = \rho$ is the compact real form, we find a close relationship between the fixed point sets of $i_2$ and $i_3$.
\begin{proposition}\label{propmultbyi}
Let $\sigma = \rho$ be the compact real form. Then the fixed point sets of $i_2,i_3$ are diffeomorphic.
\end{proposition}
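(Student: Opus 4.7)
The plan is to exhibit an explicit diffeomorphism between the two fixed-point loci by means of the natural $\mathbb{C}^*$-action that scales the Higgs field. The starting observation is that when $\sigma = \rho$, the Cartan involution is $\theta = \rho \circ \sigma = \rho^2 = \mathrm{id}$, so the formulas for $i_2$ and $i_3$ reduce to
\begin{align*}
i_2(\bar\partial_A, \Phi) &= (f^*\rho(\bar\partial_A), -f^*\rho(\Phi)), \\
i_3(\bar\partial_A, \Phi) &= (f^*\rho(\bar\partial_A), \phantom{-}f^*\rho(\Phi)),
\end{align*}
so they differ only by a sign in the second component. In particular the associated involution $i_1$ acts simply as $(\bar\partial_A, \Phi)\mapsto (\bar\partial_A, -\Phi)$, and the three involutions pairwise commute with $i_1\circ i_2=i_3$.

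The second step is to define $\phi : \mathcal{M}_G(\Sigma) \to \mathcal{M}_G(\Sigma)$ by $\phi(\bar\partial_A, \Phi) = (\bar\partial_A, i\Phi)$. This is a well-defined smooth map with smooth inverse $\phi^{-1}(\bar\partial_A, \Phi) = (\bar\partial_A, -i\Phi)$: it is the specialization at $t=i$ of the $\mathbb{C}^*$-action $t\cdot(\bar\partial_A,\Phi)=(\bar\partial_A,t\Phi)$, and since $\Phi$-invariant subbundles coincide with $i\Phi$-invariant subbundles, polystability is preserved. Hence $\phi$ is a diffeomorphism of the moduli space.

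The third step is to verify the intertwining relation $\phi \circ i_2 = i_3 \circ \phi$. Using that $\rho$ is anti-linear, so that $\rho(i\Phi) = -i\rho(\Phi)$, a direct computation gives
\begin{equation*}
\phi\bigl(i_2(\bar\partial_A, \Phi)\bigr) = (f^*\rho(\bar\partial_A), -i\,f^*\rho(\Phi)) = i_3\bigl(\phi(\bar\partial_A, \Phi)\bigr).
\end{equation*}
Consequently, if $(\bar\partial_A, \Phi)$ is an $i_2$-fixed point then
\[
i_3(\phi(\bar\partial_A, \Phi)) = \phi(i_2(\bar\partial_A, \Phi)) = \phi(\bar\partial_A, \Phi),
\]
so $\phi$ maps the $i_2$-fixed locus into the $i_3$-fixed locus, and symmetrically $\phi^{-1}$ maps the $i_3$-fixed locus into the $i_2$-fixed locus. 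Therefore $\phi$ restricts to the desired diffeomorphism.

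I do not expect any genuine obstacle: once one notices that $\theta$ collapses to the identity in the compact case, the whole argument is really just the remark that multiplication of $\Phi$ by $i$ absorbs the sign difference between the two involutions. The only mild thing to check carefully is that this $\mathbb{C}^*$-twist is well-defined on the moduli space (preservation of polystability and of the $S$-equivalence class), which is immediate since the action does not touch the holomorphic structure and preserves $\Phi$-invariant filtrations.
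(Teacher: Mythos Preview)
Your proof is correct and follows essentially the same approach as the paper: both use the $\mathbb{C}^*$-action at $t=i$, namely $(\bar\partial_A,\Phi)\mapsto(\bar\partial_A,i\Phi)$, to conjugate $i_2$ into $i_3$ when $\sigma=\rho$. You have simply written out the intertwining computation and the well-definedness check in more detail than the paper does.
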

\begin{proof}
Let $i : \mathcal{M}_{G} \to \mathcal{M}_{G}$ denote the action of $i \in \mathbb{C}^*$ sending $(\overline{\partial}_A , \Phi )$ to $(\overline{\partial}_A , i\Phi)$. Then if $\sigma = \rho$ we see that $i_3 = i \circ i_2 \circ i^{-1}$. It follows that $i$ exchanges the fixed point sets of the involutions $i_2,i_3$.
\end{proof}

By restriction $J$ defines a complex structure on the fixed point set of $i_2$. Similarly $K$ defines a complex structure on the fixed point set of $i_3$. Moreover we have $i \circ J = K \circ i$, so the isomorphism in Proposition \ref{propmultbyi} is in fact a complex analytic isomorphism. In Section \ref{secrep}, we see that the fixed point sets for $i_2,i_3$ correspond to quite different classes of representations. It is then unexpected to find a natural bijection between these spaces.\\

When $\sigma$ is the split real form the involution $i_1$ admits a very simple description in terms of the Hitchin fibration. For $d \in \pi_1(G)$, we have that $i_1$ sends $\mathcal{M}_G^d$ to $\mathcal{M}_G^{\theta_*(d)}$. Thus fixed points can only occur when $\theta_*(d) = d$. Suppose now that $\theta_*(d) = d$ and let $F = h^{-1}(a)$ be a non-singular fibre of $h : \mathcal{M}_G^d \to \mathcal{A}_G$ corresponding to a point $a \in \mathcal{A}_G$. From \cite[Chapter 4]{thesis} we have the following: 

\begin{theorem}\label{proporder2}
The action of $i_1$ preserves the Hitchin fibration, $h \circ i_1 = h$. Let $d \in \pi_1(G)$ with $\theta_*(d) = d$ and let $F$ be a non-singular fibre of $\mathcal{M}_G^d$. The restriction $ i_1|_F : F \to F$ of $i_1$ to $F$ has fixed points. Let $m \in F$ be a fixed point. Then the action of $i_1|_F$ on $F$ is given by $x \mapsto -x$ with respect to the origin defined by $m$.
\end{theorem}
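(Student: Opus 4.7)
The plan is to establish the three assertions of Theorem \ref{proporder2} in sequence.

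To show $h\circ i_1=h$, I will use that the Hitchin map is assembled from a basis $p_1,\dots,p_k$ of $G$-invariant polynomials on $\mathfrak{g}$. For the split real form, $\theta$ acts as $-1$ on a maximally non-compact Cartan subalgebra $\mathfrak{h}\subset\mathfrak{g}$, so $-\theta$ acts as the identity on $\mathfrak{h}$. By Chevalley restriction, each $p_i$ is determined by its restriction to $\mathfrak{h}$, and $G$-invariance together with the density of regular semisimple elements then yield $p_i\circ(-\theta)=p_i$ on all of $\mathfrak{g}$. Hence $h(i_1(\overline{\partial}_A,\Phi))=h(\overline{\partial}_A,\Phi)$.

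For the existence of a fixed point in each non-singular fibre, I will invoke the Hitchin section $s:\mathcal{A}_G\to\mathcal{M}_G^d$ (for the component $d$ fixed by $\theta_*$). For the split real form this section lands in the locus of Higgs bundles with holonomy in $G^\sigma$, and hence in the fixed locus of $i_1$; so for each $a\in\mathcal{A}_G$, $s(a)$ furnishes an $i_1$-fixed point of the fibre $h^{-1}(a)$.

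For the central claim $i_1|_F(x)=-x$, I will compute $di_1(m)$ using the hyperk\"ahler data. Since $i_1$ is $I$-holomorphic and anti-holomorphic in $J$ and $K$, one has $i_1^*\omega_I=\omega_I$, $i_1^*\omega_J=-\omega_J$ and $i_1^*\omega_K=-\omega_K$, whence $i_1^*\Omega_I=-\Omega_I$ for the holomorphic symplectic form $\Omega_I=\omega_J+i\omega_K$. Writing $A=di_1(m)|_{T_mF}$ and $B$ for the induced map on the quotient $T_m\mathcal{M}_G/T_mF$, the identity $h\circ i_1=h$ together with the isomorphism $dh:T_m\mathcal{M}_G/T_mF\to T_{h(m)}\mathcal{A}_G$ (valid at smooth fibres) forces $B=\mathrm{id}$. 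Since $F$ is Lagrangian, $\Omega_I$ descends to a perfect pairing $T_mF\otimes(T_m\mathcal{M}_G/T_mF)\to\mathbb{C}$; substituting gives $\Omega_I(AX,Y)=-\Omega_I(X,Y)$, and non-degeneracy forces $A=-\mathrm{id}$. Since any holomorphic self-map of a complex torus fixing a point lifts to an affine map on the universal cover and is therefore a group homomorphism with respect to that origin, $i_1|_F$ coincides with $-\mathrm{id}$ relative to $m$.

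The main obstacle is the existence step: it depends on having the Hitchin section available for an arbitrary complex semisimple $G$, together with the verification that its image lies in the split real fixed locus of $i_1$ — a non-trivial input from Hitchin's original work on Teichm\"uller components. The derivative computation in the final step, by contrast, is purely formal once the hyperk\"ahler structure and Lagrangian property of the fibres are invoked.
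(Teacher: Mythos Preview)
Your argument for $h\circ i_1=h$ and for the derivative computation $di_1(m)|_{T_mF}=-\mathrm{id}$ is correct and is essentially the paper's reasoning: both use $i_1^*\Omega_I=-\Omega_I$ together with $h\circ i_1=h$ to force the linear part of $i_1|_F$ to be $-\mathrm{id}$. The paper phrases this via the commuting Hamiltonian vector fields $X_i$ with $i_{X_i}\Omega_I=h^*\alpha^i$ and shows $(i_1)_*X_i=-X_i$, but this is the same computation globalised.

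The genuine gap is in your existence step. The theorem is stated for an arbitrary component $\mathcal{M}_G^d$ with $\theta_*(d)=d$, but the Hitchin section lands in a single topological component (the one containing the trivial $G$-bundle, determined by the choice of $K^{1/2}$). So invoking the section does not furnish fixed points in the other components satisfying $\theta_*(d)=d$, and for non-simply-connected $G$ there can be several of these. The paper avoids this by reversing your order of argument: it first shows, without assuming a fixed point, that $(i_1)_*X_i=-X_i$, which forces $i_1|_F$ to be an affine map of the torus with linear part $-\mathrm{id}$, i.e.\ $x\mapsto -x+c$; such a map always has fixed points since multiplication by $2$ is surjective on a complex torus. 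Your derivative computation can be salvaged in the same way---it does not actually need $m$ to be fixed, only that $i_1|_F$ is holomorphic (hence affine)---so you could compute the linear part at any point and then deduce existence of fixed points. As written, however, your proof is incomplete for $d\neq 0$.
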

\begin{proof}
For the split real form the map $\Phi \mapsto -\theta(\Phi)$ preserves the invariant polynomials, hence $h \circ i_1 = h$. Then for any $d \in \pi_1(G)$ with $\theta_*(d) = d$, we have that $i_1$ acts on the fibres of $\mathcal{M}_G^d \to \mathcal{A}_G$. Let $F = h^{-1}(a)$ be a non-singular fibre of $h$ over a point $a \in \mathcal{A}_G$. As the fibres are connected \cite{dopa} we have that $F$ is a complex torus and the restriction $i_1|_F : F \to F$ is a complex automorphism, since $i_1$ is holomorphic in $I$.\\

Recall that the Hitchin fibration is Lagrangian with respect to $\Omega_I = \omega_J + i\omega_K$. Let $\alpha^1 , \dots , \alpha^k$ be a complex basis for $T_a^*\mathcal{A}_G$ and define vector fields $X_1 , \dots , X_k$ on $F$ by requiring $i_{X_i} \Omega_I = h^* \alpha^i $. These vector fields are commuting and integrate to an action of $\mathbb{C}^k$ on $F$ by translation. The stabilizer of this action is a lattice $\Lambda \subseteq \mathbb{C}^k$ giving an identification $F = \mathbb{C}^k / \Lambda$. Using the fact that $i_1$ is an isometry which is anti-holomorphic in $J,K$ we have $i_1^* \Omega_I = -\Omega_I$. On the other hand since $h \circ i_1 = h$ we have $i_1^* (h^* \alpha^i)  = h^*\alpha^i$. It follows that ${i_1}_*(X_i) = -X_i$ for each $i$. This is enough to ensure that $i_1$ has a fixed point $m \in F$. Using $m$ as an origin, we have by exponentiation that the action of $i_1$ on $F$ is given by $x \mapsto -x$.
\end{proof}

\begin{remark} In particular, for each $d \in \pi_1(G)$ with $\theta_*(d) = d$, the intersection of the fixed points of $i_1$ on $\mathcal{M}_G^d$ with the smooth fibres of the Hitchin fibration $h: \mathcal{M}_{G}^d\rightarrow \mathcal{A}_{G}$ is given by the elements of order two in those fibres.
\end{remark}

The involutions $i_2,i_3$ are well-behaved with respect to the Hitchin fibration $\mathcal{M}_G \to \mathcal{A}_G$ and give rise to real integrable systems. In \cite{BS13} we saw that there exists an anti-linear involution $f_2 : \mathcal{A}_G \to \mathcal{A}_G$ such that $h \circ i_2 = f_2 \circ h$. The details of this proof carry over without difficulty to the case $i_3$ and we obtain a second anti-linear involution $f_3 : \mathcal{A}_G \to \mathcal{A}_G$ such that $h \circ i_3 = f_3 \circ h$. Let $\mathcal{L}_{G} \subset \mathcal{M}_G$ be the fixed point set of $i_2$ and $L_G \subset \mathcal{A}_G$ the fixed point set of $f_2$. The restriction of $\omega_J$ to $\mathcal{L}_G$ gives a symplectic structure and $h : \mathcal{L}_G \to L_G$ is a Lagrangian fibration \cite{BS13}. The non-singular fibres are a disjoint union of tori, though the number of components of the fibres generally varies as one moves around the base $L_G$. The proof applies just as well to the case $i_3$. Let $\mathcal{L}'_G$ be the fixed point set of $i_3$ and $L'_G$ the fixed point set of $f_3$. The restriction of $\omega_K$ to $\mathcal{L}'_G$ gives a symplectic structure and $h : \mathcal{L}'_G \to L'_G$ is a Lagrangian fibration. Again, the non-singular fibres are unions of tori.

The situation for $i_1$ is different. In this case the action of $i_1$ on $\mathcal{M}_G$ covers the involution $f_1 = f_2 \circ f_3 : \mathcal{A}_G \to \mathcal{A}_G$. Note that $f_1$ is linear and thus its fixed point subspace need not be half dimensional. Moreover it is possible that the fixed point subspace of $f_1$ could lie entirely within the singular locus of the Hitchin fibration, so that the restriction of $h$ to the fixed point set of $i_1$ may be poorly behaved (for example this is the case when $G=SL(2m, \mathbb{C}), SO(4m, \mathbb{C})$, $Sp(4m, \mathbb{C})$ and $G^{\sigma} = SL(m, \mathbb{H}), SO(2m, \mathbb{H})$, $ Sp(m, m)$ respectively, studied in \cite{hitchin13}).

\begin{remark}
In the case of the involution $i_3$, the set of fixed points in the moduli space of classical Higgs bundles  for which $\Phi=0$ was thoroughly studied in \cite{florent}.
\end{remark}


\section{General linear case and $K$-theory}\label{sec:gl}

In this section we consider the case where $G = GL(m,\mathbb{C})$ with real structure $\sigma(A) = \overline{A}$ and Cartan involution $\theta(A) = (A^t)^{-1}$. Here a Higgs bundle $(V,\Phi)$ is a rank $m$ holomorphic vector bundle $V$ and holomorphic map $\Phi : V \to V \otimes K$. We will see that $K$-theory in various forms helps to distinguish between connected components of the fixed point sets of the real structures $i_1,i_2,i_3$.


\subsection{$KO$-theory and $i_1$}

Fixed points of $i_1$ may be obtained from solutions to the Hitchin equations with holonomy in the corresponding real form $G^\sigma = GL(m,\mathbb{R})$. For Higgs bundles with holonomy in $GL(m,\mathbb{R})$ we may take $V$ to be a holomorphic $O(m,\mathbb{C})$-bundle with reduction to the maximal compact $O(m)$ and $\Phi$ to be symmetric. Such a Higgs bundle $(V,\Phi)$ is a fixed point of $i_1$ since the orthogonal structure gives an isomorphism between $(V,\Phi)$ and $i_1(V,\Phi) = (V^* , \Phi^t)$. Since the vector bundle $V$ has a real structure it defines a class $[V] \in KO^0(\Sigma)$ in the $KO$-theory of $\Sigma$. We have an isomorphism
\begin{equation*}
KO^0(\Sigma) = \mathbb{Z} \oplus H^1(\Sigma , \mathbb{Z}_2) \oplus H^2(\Sigma , \mathbb{Z}_2)
\end{equation*}
given by $[V] \mapsto ( {\rm rank}(V) , w_1(V) , w_2(V) )$, so the relevant topological invariants are the Stiefel-Whitney classes $w_1(V) , w_2(V)$ of $V$.\\

When $m = 2m'$ is even a second class of fixed points of $i_1$ are given by Higgs bundles with holonomy in $GL(m',\mathbb{H})$. For this $V$ is a holomorphic bundle with $Sp(2m' , \mathbb{C})$-structure and $\Phi = \Phi^t$ using the symplectic transpose \cite{hitchin13}. The symplectic structure then gives an isomorphism $(V,\Phi) \simeq (V^*,\Phi^t)$ so that such Higgs bundles are fixed points of $i_1$. The symplectic structure on $V$ defines a class $[V] \in KSp(\Sigma)$, the Grothendieck group of bundles with symplectic structure on $\Sigma$. However since $Sp(m')$ is simply connected all symplectic bundles are trivial and $KSp(\Sigma) = \mathbb{Z}$, the only invariant being the rank of the bundle. We will see in Section \ref{secrep} that a fixed point of $i_1$ with simple holonomy must belong to one of these two classes.


\subsection{Equivariant $K$-theory and $i_2$}

Let $(V,\Phi)$ be a fixed point of $i_2$ with simple holonomy. This requires $V$ to have degree zero, so the associated connection $\nabla = \nabla_A + \Phi + \Phi^*$ is flat. Fixed points of $i_2$ correspond to connections $\nabla$ such that $f^* \nabla \simeq \nabla$, for $f:\Sigma\rightarrow \Sigma$ an anti-holomorphic involution as introduced in Section \ref{sec:real}. Equivalently there is a bundle isomorphism $\varphi : V \to V$ covering $f$ which respects $\nabla$. Thus $\varphi^2$ is covariantly constant, so assuming $\nabla$ is simple we have that $\varphi^2 = c$ for some constant $c \in \mathbb{C}^*$. Choosing a square root $c^{1/2}$ and replacing $\varphi$ by $\varphi c^{-1/2}$ we obtain an involution $\varphi : V \to V$ lifting $f$ and preserving $\nabla$. By simplicity the lift $\varphi$ is unique up to an overall sign. The $\mathbb{Z}_2$-action of $\varphi$ gives $V$ the structure of a $\mathbb{Z}_2$-equivariant vector bundle and thus defines a class $[V] \in K^0_{\mathbb{Z}_2}(\Sigma)$ in equivariant $K$-theory. The class $[V]$ is determined from $(V,\Phi)$ up to a sign ambiguity which corresponds to replacing $\varphi$ by $-\varphi$.\\

If $f$ acts without fixed points, the quotient $\Sigma' = \Sigma / f$ is a compact non-orientable surface. Equivariant bundles are obtained by pullback from $\Sigma'$ and we have
\begin{equation*}
\widetilde{K}^0_{\mathbb{Z}_2}(\Sigma) = \widetilde{K}^0(\Sigma') = \mathbb{Z}_2.
\end{equation*}
This defines a $\mathbb{Z}_2$-invariant with the property that it is non-zero for the unique non-trivial rank $m$ bundle on $\Sigma'$. When $f$ has $n > 0$ fixed components we have, by the Mayer-Vietoris sequence 
\begin{equation}\label{equkz2fix}
\widetilde{K}^0_{\mathbb{Z}_2}(\Sigma) = \mathbb{Z}^n.
\end{equation}
Over each fixed component $S^1 \subset \Sigma$ of $f$ we may decompose $V|_{S^1}$ into the $\pm 1$-eigenspaces $V^{\pm}|_{S^1}$ of $\varphi$. Taking the dimension ${\rm dim}(V^{-}|_{S^1})$ gives a homomorphism $\widetilde{K}^0_{\mathbb{Z}_2}(\Sigma) \to \mathbb{Z}$ and taking the sum of these homomorphisms over the fixed components of $f$ gives the isomorphism (\ref{equkz2fix}). The group of $\mathbb{Z}_2$-equivariant line bundles is given by $H^2_{\mathbb{Z}_2}(\Sigma , \mathbb{Z}) = \mathbb{Z}_2^n$. For each assignment of $\pm 1$ to each fixed component of $f$, we can therefore find a corresponding equivariant line bundle unique up to isomorphism. This shows that exactly $(m+1)^n$ classes in $\widetilde{K}^0_{\mathbb{Z}_2}(\Sigma)$ are represented by rank $m$ equivariant bundles.\\

The equivariant $K$-theory classification is sharp in the following sense:
\begin{proposition}
Let $V,V'$ be equivariant vector bundles on $\Sigma$. Then $V,V'$ are isomorphic as equivariant vector bundles if and only if $[V] = [V'] \in K^0_{\mathbb{Z}_2}(\Sigma)$. 
\end{proposition}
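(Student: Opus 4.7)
The forward direction is immediate from functoriality of equivariant $K$-theory, so I concentrate on the converse. Suppose $[V] = [V'] \in K^0_{\mathbb{Z}_2}(\Sigma)$, so that $V \oplus W \cong V' \oplus W$ equivariantly for some equivariant bundle $W$. The description of $K^0_{\mathbb{Z}_2}(\Sigma)$ given just before the proposition identifies this class with the rank together with the numerical invariants $k_i = \dim V^-|_{S^1_i}$ on each fixed component of $f$, so $V$ and $V'$ already agree in both data. The plan is to show that these invariants classify rank-$m$ equivariant bundles up to equivariant isomorphism.

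The main step is to descend to the quotient $\Sigma' = \Sigma/f$. If $f$ acts freely, $\Sigma'$ is a closed non-orientable surface, equivariant bundles on $\Sigma$ correspond to complex bundles on $\Sigma'$, and rank-$m$ complex bundles on $\Sigma'$ are classified by $c_1 \in H^2(\Sigma', \mathbb{Z}) \cong \mathbb{Z}_2$, which is precisely the invariant that distinguishes classes in $\tilde{K}^0(\Sigma') \cong \mathbb{Z}_2$. If $f$ has $n>0$ fixed circles, then $\Sigma'$ is a surface with $n$ boundary components and an equivariant bundle corresponds to a pair $(V', \sigma)$ where $V'$ is a complex rank-$m$ bundle on $\Sigma'$ and $\sigma$ is a fibrewise involution on $V'|_{\partial \Sigma'}$. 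Since $\Sigma'$ retracts onto its 1-skeleton, $V'$ is trivializable; after trivializing, the decomposition of $V'|_{S^1_i}$ into $\pm 1$-eigenspaces of $\sigma$ is classified by a map from $S^1_i$ into the complex Grassmannian of $k_i$-planes in $\mathbb{C}^m$, which is simply connected. Hence the decomposition is determined up to homotopy by the integer $k_i$ alone, matching the $(m+1)^n$ count of equivariant classes noted above the proposition.

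Finally I would promote these homotopy classifications to actual equivariant bundle isomorphisms: any homotopy of boundary decompositions extends through a collar neighbourhood of $\partial \Sigma'$, and in the free case obstruction theory on the 2-complex $\Sigma'$ yields an isomorphism of complex bundles once $c_1$ agrees. The main obstacle is controlling the ambiguity in trivializing $V'$: two trivializations differ by a gauge transformation $\Sigma' \to U(m)$ which conjugates the boundary involution, and one must verify that any two boundary involutions representing the same tuple $(k_1,\dots,k_n)$ can be conjugated by a gauge transformation extending from the boundary across all of $\Sigma'$. The low dimensionality of $\Sigma$ is what makes this routine; on higher-dimensional manifolds equivariant $K$-theory genuinely fails to capture bundle isomorphism, so the real content of the proposition is that two-dimensionality suffices for $K$-theoretic sharpness.
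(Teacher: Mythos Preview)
Your approach is sound but genuinely different from the paper's. The paper argues by induction on rank: it shows that any equivariant bundle $V$ contains an equivariant line subbundle $L$ with prescribed $\pm 1$-eigenvalues on each fixed circle, then splits $V = L \oplus L^{\perp}$ using a $\varphi$-invariant hermitian metric and repeats on the complement. The key step is producing $L$: after tensoring by a suitable equivariant line bundle one reduces to the case where all prescribed eigenvalues are $+1$, and then a $\varphi$-invariant nowhere-vanishing section extending the given sections on the fixed circles exists (essentially because such sections descend to sections of a bundle on $\Sigma'$, which is trivial). This decomposes every equivariant bundle as a sum of equivariant line bundles, and since those are classified by their boundary eigenvalues (the group $\mathbb{Z}_2^n$ mentioned just before the proposition), the $K$-theory class determines the isomorphism type.

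Your route via descent to $\Sigma'$ and obstruction theory reaches the same conclusion but trades the explicit splitting for a gauge-theoretic classification of boundary involutions. The step you flag as the ``main obstacle'' really is the heart of your argument and is not quite as automatic as you suggest: one must check that a conjugating gauge transformation $h : \partial\Sigma' \to GL(m,\mathbb{C})$ can be chosen so that it extends over $\Sigma'$, which amounts to controlling the winding numbers of $h$ on the boundary circles. This works because the stabiliser $GL(k_i,\mathbb{C})\times GL(m-k_i,\mathbb{C})$ surjects onto $\pi_1(GL(m,\mathbb{C}))$, so each winding number can be adjusted freely; but this needs to be said rather than deferred. The paper's inductive splitting sidesteps this bookkeeping entirely, at the cost of being less structural; your approach, once completed, has the advantage of making transparent why two-dimensionality is exactly what is needed.
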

\begin{proof}
In the case that $f$ has no fixed points this is trivial. Suppose now that $f$ has fixed points and let $V$ be an equivariant vector bundle. Let $\varphi : V \to V$ be the involution on $V$ and let $S^1_1, \dots , S^1_n \subset \Sigma$ be the fixed circle components of $f$. Suppose that for $i = 1, \dots , n$ we are given sections $s_i$ of $V|_{S^1_i}$ such that $\varphi(s_i) = \epsilon_i s_i$, where $\epsilon_i = \pm 1$. We claim that there exists a rank $1$ subbundle $L \subset V$ such that $\varphi(L) = L$ and the restriction of $L$ to $S^1_i$ is spanned by $s_i$. Let $L$ be the equivariant line bundle with eigenvalue $\epsilon_i$ over $S^1_i$. Considering $V \otimes L^*$ it suffices to assume $\epsilon_i = 1$ for all $i$. This is enough to ensure the existence of a section $s$ of $V$ which restricts to $s_i$ on $S^1_i$ and such that $\varphi(s_i) = s_i$. This proves our claim. Choosing a $\varphi$-invariant hermitian metric and taking the complement of $L$ in $V$ we have that $V$ is the sum of $L$ and a lower rank equivariant bundle. The proposition now follows by induction on rank.
\end{proof}

We give another description of equivariant $K$-theory, which though unconventional is well suited to spectral curves. Let $V$ be a complex vector bundle with anti-linear isomorphism $\psi : V \to V^*$ covering $f$. We say $\psi$ is symmetric if $(\psi(a))(b) = \overline{ (\psi(b))(a) }$ for all $a,b$. Let $h$ be a hermitian metric on $V$, taken anti-linear in the first factor. We can also view $h$ as an anti-linear isomorphism $h : V \to V^*$. We say $h$ is compatible with $\psi$ if $h(a,b) = \psi(a)( h^{-1} \psi(b) )$. Setting $\varphi = h^{-1} \circ \psi$, we have that $\varphi : V \to V$ is a linear involution covering $f$ and $(V,\varphi)$ becomes a class in $K^0_{\mathbb{Z}_2}(\Sigma)$. It is clear that this class does not depend on the choice of compatible hermitian metric $h$. Therefore we may identify $K^0_{\mathbb{Z}_2}(\Sigma)$ with the Grothendieck group of vector bundles $V$ equipped with a symmetric anti-linear isomorphism $\psi : V \to V^*$. Note that over the fixed points of $f$, the map $\psi$ defines an indefinite hermitian form $\langle a , b \rangle = (\psi(a))(b)$. The signature of this form over each component corresponds to the dimensions of the $\pm 1$-eigenspaces of the restriction of $\varphi$.\\

Consider the product $\overline{\Sigma}=\Sigma \times [-1,1]$ with involution $\tau(x,t) = (f(x),-t)$. The quotient  $M=\bar \Sigma /\tau$ is a 3-manifold with boundary $\partial M =\Sigma$. From $M$ we obtain a distinguished subspace of representations of $\pi_1(\Sigma)$, those representations which extend as flat connections from $\Sigma$ to $M$. Such representations are fixed points of $i_2$ \cite{BS13}. Moreover, we can use equivariant $K$-theory to characterise which fixed points of $i_2$ arise in this manner.
\begin{theorem}\label{3man}
Let $(V,\Phi)$ be a solution of the Hitchin equations with a lift of $f$ to an involution $\varphi : V \to V$ preserving the associated flat connection $\nabla$. Suppose also that $\nabla$ has simple holonomy. Then $\nabla$ extends over the 3-manifold $M$ if and only if the class $[V] \in \tilde{K}_{\mathbb{Z}_2}^0(\Sigma)$ is trivial (possibly on replacing $\varphi$ by $-\varphi$).
\end{theorem}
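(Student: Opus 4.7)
The approach is to realise the extension problem as a descent problem via the branched double cover $p:\bar\Sigma\to M$. By local inspection near a branch point (where $\tau$ acts on the normal $2$-plane to the fixed circle by rotation through $\pi$ and the topological quotient is smooth), a $\tau$-equivariant vector bundle on $\bar\Sigma$ is pulled back from a vector bundle on $M$ precisely when the equivariant involution acts as the identity on every fibre over $\Sigma^f\times\{0\}$, and a $\tau$-invariant flat connection on such a bundle descends to a flat connection on $M$. I take this descent principle as the central tool.

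For the forward direction, assume $\nabla$ extends to a flat bundle $\tilde V\to M$ and pull back to $\bar\Sigma$. Since $[-1,1]$ is contractible and the connection is flat, parallel transport in the $t$ direction identifies $p^*\tilde V$ with $V\boxtimes\mathbb{C}$, carrying the connection $\bar\nabla$ that equals $\nabla$ along $\Sigma$ and is trivial in $t$. The induced lift $\tilde\varphi$ of $\tau$ is flat, hence constant in $t$ under the above identification, so its restriction to any slice $\Sigma\times\{t\}$ is an involution of $V$ covering $f$ and preserving $\nabla$. By the simple holonomy hypothesis such a lift is unique up to sign, so it equals $\pm\varphi$ for a single choice of sign. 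Applying the descent principle on each fixed circle $\Sigma^f_i\times\{0\}$ forces $(\pm\varphi)|_{\Sigma^f_i}=\mathrm{id}$ for every $i$, i.e.\ $\dim V^{-}|_{\Sigma^f_i}=0$ for all $i$ after possibly replacing $\varphi$ by $-\varphi$. Under the isomorphism $\tilde K^0_{\mathbb{Z}_2}(\Sigma)\cong\mathbb{Z}^n$ recalled in (\ref{equkz2fix}) this is exactly the condition $[V]=0$.

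For the converse, assume $\varphi|_{\Sigma^f_i}=+1$ for every $i$. On $\bar V:=V\boxtimes\mathbb{C}$ define $\tilde\varphi$ to act on every slice $\Sigma\times\{t\}$ as $\varphi$, covering $\tau$. Then $\tilde\varphi^2=\mathrm{id}$ (from $\varphi^2=\mathrm{id}$), $\tilde\varphi$ preserves $\bar\nabla$ (from $\varphi^{*}\nabla=\nabla$ together with $t$-independence of $\tilde\varphi$), and over the $\tau$-fixed set it acts on fibres as $\varphi|_{\Sigma^f}=\mathrm{id}$. The descent principle then produces a vector bundle $\tilde V\to M$ with $p^*\tilde V=\bar V$, and $\bar\nabla$ descends to a flat connection on $\tilde V$; under the identification $\partial M=\Sigma$ its restriction to the boundary recovers $(V,\nabla)$. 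The one delicate point in the whole argument is the descent at the branch circles: triviality of the fibre action of $\tilde\varphi$ is exactly what prevents the quotient from being merely an orbifold bundle, and the rest reduces to the uniqueness (up to sign) of the simple lift together with the compatibility of parallel transport with $\tilde\varphi$.
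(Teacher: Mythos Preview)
Your argument is correct. The paper takes a slightly shorter route: rather than descending along the branched cover $\overline{\Sigma}\to M$, it first invokes the homotopy equivalence $M\simeq\Sigma'=\Sigma/f$ (a surface with boundary when $f$ has fixed points), so that extending $\nabla$ over $M$ amounts to descending the local system of $\nabla$-flat sections from $\Sigma$ to $\Sigma'$. The quotient local system has full rank over the boundary circles of $\Sigma'$ exactly when $\pm\varphi$ acts as the identity there, which is the vanishing of $[V]\in\tilde{K}_{\mathbb{Z}_2}^0(\Sigma)$. Both arguments isolate the same obstruction; the paper's two-dimensional version sidesteps the local analysis of the branch circles by trading $M$ for its homotopy-equivalent surface at the outset, while your three-dimensional descent makes the mechanism (the quotient bundle failing to be locally free when the fibre action is non-trivial) geometrically explicit. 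One small imprecision worth noting: $\tau$ does not preserve the slice $\Sigma\times\{t\}$ for $t\neq 0$, so $\tilde\varphi$ literally restricts to an involution of $V$ covering $f$ only on $\Sigma\times\{0\}$; your parallel-transport identification repairs this, and in any case the slice $t=0$ already suffices for the comparison with $\pm\varphi$.
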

\begin{proof}
Let $\Sigma' = \Sigma/f$ be the quotient which is a surface with boundary. There is a homotopy equivalence $M \simeq \Sigma'$, so we can restate the problem in terms of finding a local system on $\Sigma'$ such that the pullback to $\Sigma$ gives the local system of constant sections of $\nabla$. If such a local system exists we must have that $\varphi$ or $-\varphi$ acts as the identity on each fixed component by simplicity. Conversely suppose that $\varphi$ acts as the identity on each fixed component. Then the local system of constant sections of $\nabla$ can be factored by $\varphi$ giving the desired local system on $\Sigma'$.
\end{proof}


\subsection{$KR$-theory and $i_3$}

Let $(V,\Phi)$ be a fixed point of $i_3$ with simple holonomy. Again it is easiest to describe fixed points in terms of the associated connection $\nabla$. In this case fixed points correspond to the condition that there is an anti-linear isomorphism $\varphi : V \to V$ covering $V$ and preserving $\nabla$. Since $\varphi^2$ is covariantly constant we have, assuming $\nabla$ is simple, that $\varphi^2 = c$ for some non-zero constant $c$. Then since $\varphi^3 = \varphi \circ c = c \circ \varphi$ we find $c$ is real. After rescaling we may assume that $c$ is either $1$ or $-1$. Since $\nabla$ is simple it follows that one and only one of these two cases can occur.\\

When $c = 1$ we have that $f$ lifts to an anti-linear involution $\varphi : V \to V$ preserving $\nabla$. This gives $V$ the structure of a real vector bundle in the sense of Atiyah \cite{A66} and a class $[V] \in KR^0(\Sigma)$. The map $\varphi$ is determined only up to rescaling by $U(1) \subset \mathbb{C}^*$, but this action is trivial on $KR^0(\Sigma)$ so that the class $[V]$ is independent of the choice of $\varphi$. Let $n$ be the number of components of the fixed point set of $f$. From the Mayer-Vietoris sequence we have an isomorphism \cite{kar}
\begin{equation*}
\widetilde{KR}^0(\Sigma) = \{ (d,x^1,\dots,x^n) \in \mathbb{Z} \oplus \mathbb{Z}_2^n \, | \, d = x^1 + \dots + x^n ({\rm mod} \, 2) \}.
\end{equation*}
For a bundle $V$ real with respect to $f$, let $d$ be the degree. For each fixed circle component $S^1 \subset \Sigma$ of $f$ the restriction $V|_{S^1}$ is a real line bundle over the circle which has a first Stiefel-Whitney class $w_1(V|_{S^1}) \in H^1(S^1,\mathbb{Z}_2) = \mathbb{Z}_2$. Let $x^1, \dots , x^n$ be the first Stiefel-Whitney classes of $V$ restricted to the fixed components. Then $[V]$ corresponds to $(d,x^1, \dots , x^n)$. Conversely every class in $\widetilde{KR}^0(\Sigma)$ can be represented by a real bundle of rank $m$ for any $m$.\\

When $c= -1$ we have that $f$ lifts to an anti-linear isomorphism $\varphi : V \to V$ preserving $\nabla$ and $\varphi^2 = -1$. This gives $V$ a quaternionic structure. Let $KH^0(\Sigma)$ denote the Grothendieck group of quaternionic vector bundles on $\Sigma$. Then, from \cite{dup} there is an isomorphism $KH^0(\Sigma) = KR^{-4}(\Sigma)$. Using this one finds 
\begin{equation*}
KH^0(\Sigma) = \mathbb{Z}^2.
\end{equation*}
When the anti-holomorphic involution $f$ on $\Sigma$ has no fixed points there exists a quaternionic line bundle $L$ of degree $g-1 ({\rm mod} \, 2)$. Tensoring by $L$ gives the isomorphism $KH^0(\Sigma) \simeq KR^0(\Sigma) = \mathbb{Z}^2$. When $f$ has fixed points any quaternionic bundle must have even rank $2r$ and even degree $2d$ \cite{bhh}. Moreover every such pair $(2r,2d)$ occurs for if $V$ is a complex bundle of rank $r$ and degree $d$, the bundle $V \oplus f^*(\overline{V})$ admits a quaternionic structure. In all cases we see that the $KH$-class of a quaternionic bundle is classified by rank and degree.\\

From \cite{bhh} it can be deduced that the $KR$-theory class of a real bundle $E$ on $\Sigma$ completely determines $V$ as a real bundle. The same is true for quaternionic bundles and $KH$-theory.


\section{Spectral data of the fixed points}\label{sec:spectral}

From \cite{N2}, principal $G$-Higgs bundles have associated some spectral data on which we can study the induced action of the involutions $i_1,i_2$ and $i_3$. In this section we study these fixed point sets in terms of the corresponding spectral data.

\subsection{Spectral curves}

The fibres of the Hitchin system are most easily seen using spectral curves, introduced in \cite{N2}. Given a classical Higgs bundle $(V,\Phi)$ of rank $m$, the spectral curve $S$ is the set of points $\lambda \in K$ satisfying the characteristic equation
\begin{equation*}
det( \lambda - \Phi) = \lambda^m + a_1 \lambda^{m-1} + \dots + a_m = 0,
\end{equation*}
where $a_i \in H^0(\Sigma , K^i)$. From Bertini's theorem $S$ is smooth for generic $a_1, \dots , a_m$. For a smooth spectral curve $S$ the projection $K \to \Sigma$ restricts to $S$ giving a branched $m$-fold cover $p : S \to \Sigma$ and the canonical bundle $K_S$ of $S$ is given by $p^*K^m$. The eigenspaces of $\Phi$ define a line bundle $U$ on $S$. More precisely, $U \otimes p^* K$ is the cokernel of $\eta - p^* \Phi$ in $p^*V\otimes p^*K$, where $\eta$ is the tautological section of $p^*K$ \cite{bobi}. The bundle $V$ is recovered as the direct image sheaf $V = p_*U$ and we recover $\Phi$ by pushing forward the endomorphism $\eta : U \to U \otimes p^* K$.

Conversely, given a generic point $a = (a_1, \dots , a_m ) \in \mathcal{A}_{GL(m,\mathbb{C})}$ and a line bundle $U$ over the corresponding spectral curve $S$, we obtain a Higgs bundle by the above construction. This identifies the fibre of the Hitchin map over $a$ with the Picard variety $Pic(S)$ of $S$. Let $K^{1/2}$ be a spin structure on $\Sigma$. It is convenient to define a new line bundle $L$ such that $U = L \otimes p^* K^{(m-1)/2}$. Then if $V = p_*(U)$, we have by Grothendieck-Riemann-Roch $\deg{V} = \deg{L}$. The map sending a line bundle $L \in Pic(S)$ to the vector bundle $V = p_*( L \otimes p^* K^{(m-1)/2})$ with corresponding Higgs field identifies the fibre of $\mathcal{M}_{GL(m,\mathbb{C})}^d$ over $a$ with $Pic_d(S)$, the space of degree $d$ line bundles on $S$. In particular the fibre of $\mathcal{M}_{GL(m,\mathbb{C})}^0$ may be identified with the Jacobian $Jac(S)$ of $S$.


\subsection{Push-forward maps}

For this section we take $p : S \to \Sigma$ to be an $m$-sheeted branched cover, not necessarily given as a spectral curve and let $K_\Sigma, K_S$ denote the corresponding  canonical bundles. Let $K_\Sigma^{1/2},K_S^{1/2}$ be spin structures on $\Sigma$,$S$. Given a holomorphic vector bundle $W$ on $S$, set $U = W \otimes K_S^{1/2} \otimes p^* K_\Sigma^{-1/2}$ and $V = p_*(V)$. Note that $\deg{V} = \deg{W}$ and ${\rm rk}(V) = m . {\rm rk}(W)$.\\

For $KO$-theory we take $W$ to be a rank $k$ holomorphic bundle with complex orthogonal structure. Choosing a compatible hermitian structure on $W$ we have a reduction to $O(k)$, so $W$ defines a class $[W] \in KO^0(S)$. By relative duality we obtain an isomorphism $V \simeq V^*$, which is symmetric. Thus $V$ has an orthogonal structure and defines a class $[V] \in KO^0(\Sigma)$. This construction defines a push-forward homomorphism $p_* : KO^0(S) \to KO^0(\Sigma)$ in real $K$-theory. The choice of spin structures $K_\Sigma^{1/2}$, $K_S^{1/2}$ are the $KO$-orientations required to define the push-forward.\\

To consider the other $K$-theory groups suppose that $f$ lifts to an anti-holomorphic involution $\tilde{f} : S \to S$. We must choose spin structures compatibly. For this recall that to any compact Riemann surface $\Sigma$ with anti-holomorphic involution $f$, there exists a spin structure $K_\Sigma^{1/2}$ such that $f^*( K_\Sigma^{1/2}) \simeq \overline{K}_\Sigma^{1/2}$ \cite{A71}. In fact we have that $f$ lifts to an anti-linear map $\gamma_\Sigma : K_\Sigma^{1/2} \to K_\Sigma^{1/2}$ such that $\gamma_\Sigma \otimes \gamma_\Sigma : K_\Sigma \to K_\Sigma$ is the map $\omega \mapsto f^*(\overline{\omega})$. If $f$ has fixed points then $\gamma_\Sigma^2 =1$, and if $f$ has no fixed points then $\gamma^2_\Sigma = 1$ if $g$ is odd and $-1$ if $g$ is even \cite{A71}. In other words, $K_\Sigma$ admits a real or quaternionic square root $K_\Sigma^{1/2}$. Similarly choose such a spin structure $K_S^{1/2}$ on $S$ with anti-linear isomorphism $\gamma_S : K_S^{1/2} \to K_S$ covering $\tilde{f}$. If $S^1 \subset \Sigma$ is a fixed component of $f$ then $\gamma_\Sigma$ defines a real structure on the restriction of $K^{1/2}_\Sigma$ to $S^1$. By squaring we obtain a real non-vanishing section of $K_\Sigma$ along the circle and hence an orientation of the circle. Similarly we can use $\gamma_S$ to orient circles in $S$ fixed by $\tilde{f}$.\\

For equivariant $K$-theory we suppose that $W$ is a holomorphic vector bundle on $S$ with a symmetric anti-linear isomorphism $\psi : W \to W^*$ covering $\tilde{f}$. Recall that this defines a class $[W] \in K_{\mathbb{Z}_2}^0(S)$. For the push-forward construction we require $\psi$ to be holomorphic in the sense that if $\alpha$ is the germ of a holomorphic section of $W$ at $s \in S$, then $\psi \circ \alpha \circ \tilde{f}$ is the germ of a holomorphic section of $W^*$ as $\tilde{f}(s)$. Set $U = W \otimes K_S^{1/2} \otimes p^* K_\Sigma^{-1/2}$. Combining $\psi$ with $\gamma_S$, $\gamma_\Sigma$ we obtain an isomorphism $\psi' : U \to U^* \otimes K_S \otimes p^* K_\Sigma^{-1}$. Setting $V = p_*(U)$ we have that $\psi'$ descends to a symmetric anti-linear isomorphism $V \to V^*$ covering $f$ and hence defines an equivariant vector bundle on $\Sigma$. Sending $W$ to $p_*(W \otimes K_S^{1/2} \otimes p^* K_\Sigma^{-1/2} )$ defines a push-forward homomorphism $p_* : K^0_{\mathbb{Z}_2}(S) \to K^0_{\mathbb{Z}_2}(\Sigma)$. When $f$ has fixed points the following theorem completely determines the push-forward in equivariant $K$-theory:
\begin{theorem}\label{prop:eqpush}
Let $S^1 \subset \Sigma$ be a fixed component of $f$ and $m^+,m^-$ the dimensions of the $\pm 1$-eigenspaces of the $\mathbb{Z}_2$-action on $V$ over $S^1$. If $\tilde{f}$ has no fixed points then $m^+ = m^-$. If $\tilde{f}$ has fixed points then $K_\Sigma^{1/2},K_S^{1/2}$ are real with respect to $f,\tilde{f}$. Let $S^1_1, \dots, S^1_k \subset S$ be the fixed circle components of $\tilde{f}$ lying over $S^1$ and $m_i^+,m_i^-$ the dimensions of the $\pm 1$-eigenspaces of the $\mathbb{Z}_2$-action on $W$ over $S^1_i$. Let $d_i$ be the degree of $p : S_i^1 \to S^1$ using the orientations induced by $\gamma_S,\gamma_\Sigma$. Then
\begin{equation*}
m^+ - m^- = \sum_{i=1}^k (m^+_i  -m^-_i) d_i.
\end{equation*}
\end{theorem}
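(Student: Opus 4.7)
The plan is to localize the computation to a neighborhood of the fixed circle $S^1 \subset \Sigma$ and decompose the preimage $p^{-1}(S^1) \subset S$ into its $\tilde{f}$-invariant connected components. Since an equivariant $K$-theory class over $S^1$ is locally constant along $S^1$, the signatures $m^+, m^-$ may be read off the fibre $V_x = \bigoplus_{y \in p^{-1}(x)} U_y$ at a single point $x \in S^1$, where $U = W \otimes K_S^{1/2} \otimes p^* K_\Sigma^{-1/2}$. I will assume $p$ is unramified along $p^{-1}(S^1)$, which is the generic situation.

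Under this assumption each $\tilde{f}$-invariant connected component of $p^{-1}(S^1)$ is either (a) a pair of distinct circles swapped by $\tilde{f}$, (b) a single circle on which $\tilde{f}$ acts as a free involution, or (c) a pointwise fixed circle $S^1_i$. In cases (a) and (b), preimages of $x$ come in $\tilde{f}$-orbits of size two, so the restriction of $\psi_V$ to the corresponding block of $V_x$ pairs $U_y$ with $U_{\tilde{f}(y)}^*$ off-diagonally; the resulting Hermitian form has balanced $\pm 1$-signature and contributes zero to $m^+ - m^-$. If $\tilde{f}$ has no global fixed points, only (a) and (b) occur, giving the first assertion immediately.

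For case (c), pick $y \in S^1_i \cap p^{-1}(x)$ and decompose $U_y = W_y \otimes L_y$ with $L = K_S^{1/2} \otimes p^* K_\Sigma^{-1/2}$. Under relative duality the form $\psi_V|_{U_y}$ factors as $\psi_W|_y$ (of signature $(m_i^+, m_i^-)$) tensored with a real scalar produced by the fiberwise trace at $y$ of the canonical pairing $L \otimes L \to K_S \otimes p^* K_\Sigma^{-1}$. To identify the sign of this scalar, take real sections $s_\Sigma$ of $K_\Sigma^{1/2}|_{S^1}$ and $s_S$ of $K_S^{1/2}|_{S^1_i}$ fixed by $\gamma_\Sigma, \gamma_S$; by construction $s_\Sigma^2$ and $s_S^2$ are positive one-forms for the induced orientations on $S^1$ and $S^1_i$. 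Let $e = s_S \otimes (p^* s_\Sigma)^{-1}$, a real section of $L|_{S^1_i}$. In a real coordinate $u$ on $S^1_i$ aligned with its induced orientation, $p|_{S^1_i}$ reads as a real-valued function $P(u)$ whose derivative has constant sign $\mathrm{sign}(d_i)$, and
\begin{equation*}
e^2 = s_S^2 \otimes (p^* s_\Sigma^2)^{-1} = du \otimes (P'(u)\, du)^{-1} \in (K_S \otimes p^* K_\Sigma^{-1})|_{S^1_i}.
\end{equation*}
Under the canonical trivialization of $K_S \otimes p^* K_\Sigma^{-1}$ by division by the section $(dp)^*$, this equals the real function $1/P'(u)$, and the fiberwise trace at $y$ evaluates to $1/P'(u_y)$, of sign $\mathrm{sign}(d_i)$.

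Consequently each of the $|d_i|$ preimages $y \in S^1_i \cap p^{-1}(x)$ contributes $\mathrm{sign}(d_i)(m_i^+ - m_i^-)$ to $m^+ - m^-$, for a total contribution $d_i(m_i^+ - m_i^-)$ from $S^1_i$; summing over $i$ and invoking the vanishing in cases (a), (b) yields the formula. The main obstacle is precisely this sign calculation: one must align the real structures $\gamma_\Sigma, \gamma_S$ on the spin bundles, the orientations they induce on the fixed circles, and the trace map of relative duality in such a way that the signed degree $d_i$ rather than the naive sheet count $|d_i|$ emerges.
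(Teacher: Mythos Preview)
Your argument is correct and follows essentially the same route as the paper: localize at a point $x \in S^1$, observe that preimages exchanged by $\tilde{f}$ contribute zero to the signature, and compute the sign at each fixed preimage via the relative duality trace and the real sections of the spin bundles. The only minor difference is that the paper simply picks a single non-branch point $x$ and writes $d_i = \sum_j \epsilon_i^j$ as a signed count of local orientations, whereas you assume $p$ is unramified along all of $p^{-1}(S^1)$ in order to assert that $p|_{S^1_i}$ is a covering with $|d_i|$ sheets each of sign $\mathrm{sign}(d_i)$; the paper's phrasing avoids this extra hypothesis (branch points on $S^1_i$ can make the local signs $\epsilon_i^j$ differ), but the conclusion is the same since the signed count at a regular value always recovers the degree.
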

\begin{proof}
Let $x \in S^1$ be a point of $S^1$ which is not a branch point. Consider the points in the inverse image $p^{-1}(x)$. Pairs of points in $p^{-1}(x)$ exchanged by $\tilde{f}$ will push down to give a $\mathbb{Z}_2$-action with $\pm 1$-eigenspaces of equal dimension, so will not contribute to $m^+ - m^-$. Therefore we need only consider points in $p^{-1}(x)$ which are fixed by $\tilde{f}$. Let $x_i^1, \dots , x_i^{r_i}$ be the points of $p^{-1}(x)$ lying on $S^1_i$. Let $\xi_\Sigma$ be a real non-vanishing section of $K_\Sigma$ over $S^1$ compatible with the orientation of $S^1$ determined by $\gamma_\Sigma$ and similarly let $\xi_i^j$ be a real non-vanishing section of $K_S$ over $S^1_i$ compatible with $\gamma_S$. Let $\epsilon_i^j = \pm 1$ according to whether $p^*(\xi_\Sigma)$ is a positive or negative multiple of $\xi_i^j$, so $d_i = \epsilon_i^1 + \dots + \epsilon_i^{r_i}$. Let $\varphi : W \to W$ be the $\mathbb{Z}_2$-action, $h : W \to W$ a compatible hermitian metric and $\psi = h \circ \varphi : W \to W^*$. Then $U = W \otimes K_S^{1/2} \otimes p^* K_\Sigma^{-1/2}$ and we obtain a map $\widetilde{\psi} = \psi \otimes \gamma_S \otimes \gamma_\Sigma^{-1} : U \to U^* \otimes K_S \otimes p^* K^{-1}_\Sigma$. Choose $e_\Sigma \in (K_\Sigma^{1/2})_x$ such that $e^2_\Sigma = \xi_\Sigma(x)$ and $e_i^j \in (K_S^{1/2})_{x_i^j}$ with $(e_i^j)^2 = \xi_i^j(x_i^j)$. Now let $\alpha,\beta \in U_{x_i^j}$ and write $\alpha = \alpha' \otimes e_i^j \otimes e_\Sigma^{-1}$, $\beta = \beta' \otimes e_i^j \otimes e_\Sigma^{-1}$ with $\alpha',\beta' \in W_{x_i^j}$. Then, as $\widetilde{\psi}$ is defined through relative duality, we have $\widetilde{\psi}(\alpha) \beta/ dp = \psi(\alpha')\beta' \otimes \xi_i^j \otimes \xi_\Sigma^{-1} / dp$. If $p^*(\xi_\Sigma) = \rho_i^j \xi_i^j$, this is $(\rho_i^j)^{-1} \psi(\alpha')\beta'$. This contributes a sign of $\epsilon_i^j( m_i^+ - m_i^-)$ to $m^+ - m^-$. Summing we obtain the result.
\end{proof}

For the cases of real and quaternionic $K$-theory suppose $W$ is a holomorphic bundle on $S$ with real or quaternionic structure $\varphi : W \to W$. To define the push-forward we require $\varphi$ to be holomorphic meaning the map $\alpha \mapsto \varphi \circ \alpha \circ \tilde{f}$ send holomorphic germs to holomorphic germs. Since $K_S^{1/2} , K_\Sigma^{1/2}$ carry real or quaternionic structures it follows that the vector bundle $U = W \otimes K_S^{1/2} \otimes p^*K_\Sigma^{-1/2}$ is similarly real or quaternionic and this structure descends to $V = p_*(U)$. If $K_S^{1/2},K_\Sigma^{1/2}$ are both real or both quaternionic we obtain homomorphisms $p_* : KR^0(S) \to KR^0(\Sigma)$ and $p_* : KH^0(S) \to KH^0(\Sigma)$. On the other hand if one of $K_S^{1/2},K_\Sigma^{1/2}$ is real and the other quaternionic, we obtain push-forwards of the form $p_* : KR^0(S) \to KH^0(\Sigma)$ and $p_* : KR^0(S) \to KH^0(\Sigma)$. The following Theorem, which can be proved similarly to Theorem \ref{prop:eqpush}, completely determines the push-forward in $KR$-theory:
\begin{theorem}\label{prop:krpush}
Suppose $W$ has a real structure with respect to $\tilde{f}$. Let $S^1 \subset \Sigma$ be a fixed component of $f$ and $\omega$ the first Stiefel-Whitney class of $V$ over $S^1$. If $\tilde{f}$ has no fixed points then $\omega = 0$. If $\tilde{f}$ has fixed points then $K_S,K_\Sigma$ have real square roots. Choose real square roots $K_S^{1/2},K_\Sigma^{1/2}$ such that their restriction over any fixed component of $\tilde{f},f$ are trivial as real bundles. Let $S^1_1, \dots, S^1_k \subset S$ be the fixed circle components of $\tilde{f}$ lying over $S^1$ and $\omega_i$ the first Stiefel-Whitney class of $W$ over $S^1_i$. Then
\begin{equation*}
\omega = \sum_{i=1}^k \omega_i \; ( {\rm mod} \, 2).
\end{equation*}
\end{theorem}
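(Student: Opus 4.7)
My plan is to parallel the proof of Theorem \ref{prop:eqpush}, replacing the pointwise eigenspace count by a first Stiefel-Whitney class calculation. After moving $S^1$ slightly to avoid branch points of $p$, the preimage $p^{-1}(S^1) = \bigsqcup_j C_j$ is a disjoint union of smooth circles, each projecting to $S^1$ as a finite cyclic cover, and $\tilde{f}$ permutes these circles. Each $C_j$ falls into one of three types: (a) $\tilde{f}$ sends $C_j$ to a different circle; (b) $\tilde{f}(C_j) = C_j$ and $\tilde{f}|_{C_j}$ acts freely; or (c) $C_j$ is pointwise fixed, hence equals one of the $S^1_i$.

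The key observation is that types (a) and (b) contribute nothing to $w_1(V|_{S^1})$. In both cases the real structure on the corresponding direct summand of $V|_{S^1}$ pairs two $U$-fibres antilinearly at every $x \in S^1$, realising that summand as the underlying real bundle of a complex vector bundle. Realifications of complex bundles carry a canonical orientation, so their first Stiefel-Whitney class vanishes. This immediately establishes Case 1, in which only types (a) and (b) occur, and in Case 2 it reduces the computation to contributions from the fixed circles $S^1_i$. For such a circle, the chosen trivial real sections of $K_S^{1/2}|_{S^1_i}$ and $K_\Sigma^{1/2}|_{S^1}$ identify $U|_{S^1_i}$ with $W|_{S^1_i}$ as real bundles, giving this summand first Stiefel-Whitney class $\omega_i$. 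Mimicking the local argument at the end of Theorem \ref{prop:eqpush}, I would fix a non-branch point $x \in S^1$, analyse the block of $V|_x$ coming from $p^{-1}(x) \cap S^1_i$, and use the trivialising real sections $\xi_\Sigma$ and $\xi_i^j$ together with the relative-duality factor $dp$ to track the orientation of the real subbundle as $x$ traverses $S^1$. Summing over $i$ would produce the formula $\omega = \sum_i \omega_i \pmod{2}$.

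The main obstacle is this last orientation calculation: one must verify that the relative-duality signs arising from comparing $p^*\xi_\Sigma$ with the $\xi_i^j$ organise themselves so that the cumulative mod-2 contribution to $w_1$ is exactly $\omega_i$, independently of the absolute degree $|d_i|$ of the cover $S^1_i \to S^1$. This is the $KR$-theoretic counterpart of the signed sum $\sum_j \epsilon_i^j(m_i^+ - m_i^-)$ that appears in Theorem \ref{prop:eqpush}, and the delicate step is confirming that the duality factors cancel the naive $w_1$-contributions coming from the cyclic permutation of sheets of $S^1_i$ over $S^1$.
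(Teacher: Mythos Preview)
Your overall strategy---paralleling Theorem \ref{prop:eqpush}---is exactly what the paper indicates, and your treatment of types (a) and (b) is correct: in both cases the real subbundle of the corresponding summand of $V|_{S^1}$ inherits a complex structure and hence has vanishing $w_1$. (One small point: you cannot ``move $S^1$ slightly'', since $S^1$ is the fixed locus of $f$; instead work over $S^1$ minus the finitely many branch values of $p$, which suffices for computing $w_1$.)

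The gap is in your proposed handling of type (c). You invoke ``the relative-duality factor $dp$'' to cancel the cyclic-permutation contribution, by analogy with the signs $\epsilon_i^j$ in the proof of Theorem \ref{prop:eqpush}. But the two push-forwards are built differently. In the equivariant case the $\mathbb{Z}_2$-structure on $V$ is produced from a symmetric antilinear map $\widetilde{\psi} : U \to U^* \otimes K_S \otimes p^*K_\Sigma^{-1}$, and relative Serre duality---hence the division by $dp$---is precisely what introduces the $\epsilon_i^j$. In the $KR$ case, by contrast, the real structure on $V = p_*U$ is simply the direct push-down of the antilinear involution $\varphi_U = \varphi_W \otimes \gamma_S \otimes p^*\gamma_\Sigma^{-1}$ on $U$: there is no passage to the dual and no $dp$ enters. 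So the cancellation mechanism you point to is not available here, and the obstacle you correctly flag is not resolved by your argument. Indeed, a direct monodromy computation shows that the $w_1$-contribution of a fixed circle $S^1_i$ mapping to $S^1$ as a $\delta_i$-sheeted cover is $r(\delta_i - 1) + \omega_i \pmod 2$ (with $r = \mathrm{rk}\,W$), not $\omega_i$ alone; you still need an argument that disposes of the extra terms $r(\delta_i - 1)$.
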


In the case where we start with a quaternionic bundle on $S$ the push-forward follows from Grothendieck-Riemann-Roch.


\subsection{Spectral data and $K$-theory}

Let $(V,\Phi)$ be a Higgs bundle with $GL(m,\mathbb{R})$-holonomy and $L$ the corresponding spectral line bundle, hence $V = p_*( L \otimes K^{(m-1)/2})$. By Theorem \ref{proporder2} we have $L \simeq L^*$, so $L^2$ is trivial and $L$ is a real line bundle on $S$. Then $L$ defines a class $[L] \in KO^0(S)$ and we see that the $KO$-theory class of $V$ is given by the push-forward $[V] = p_*[L]$. This observation was recently made by Hitchin \cite{Char} and was used to relate the second Stiefel-Whitney class of $V$ to the mod $2$ index of $L$. Note that the fixed points of $i_1$ with holonomy in $GL(m/2,\mathbb{H})$ always have singular spectral curves \cite{hitchin13}, so on restricting to smooth fibres we do not see these fixed points.\\

Next consider fixed points of the involution $i_2$. Let $h : \mathcal{M}_{GL(m,\mathbb{C})} \to \mathcal{A}_{GL(m,\mathbb{C})}$ be the Hitchin fibration, where for $GL(m,\mathbb{C})$ we have $\mathcal{A}_{GL(m,\mathbb{C})} = \bigoplus_{i=1}^m H^0(\Sigma , K^i)$. Let $f : \mathcal{A}_{GL(m,\mathbb{C})} \to \mathcal{A}_{GL(m,\mathbb{C})}$ be the map sending a holomorphic differential $\omega$ to $f^*(\overline{\omega})$. We show in \cite{BS13} that $h \circ i_2 = f \circ h$. Therefore the fixed points of $i_2$ lie over fixed points of $f$. Let $p : S \to \Sigma$ be a non-singular spectral curve associated to a fixed point $a \in \mathcal{A}_{GL(m,\mathbb{C})}$ of $f : \mathcal{A}_{GL(m,\mathbb{C})} \to \mathcal{A}_{GL(m,\mathbb{C})}$. We extend $f$ to an involution $\tilde{f} : K \to K$ on the total space of $K$ by setting $\tilde{f}( x )  = f^*(\overline{x})$ for $x \in K$. Then since $S$ corresponds to a fixed point $a \in \mathcal{A}_{GL(m,\mathbb{C})}$, we find that $\tilde{f}$ restricts to an anti-holomorphic involution on $S$ covering $f$. Choosing a spin structure $K^{1/2}$, we identify the fibre of the Hitchin map over $a$ with the Picard variety $Pic(S)$. Assume that the chosen spin structure $K^{1/2}$ is preserved by $f$, so we obtain a real or quaternionic structure $\gamma : K^{1/2} \to K^{1/2}$. The action of $i_2$ on the fibre $Pic(S)$ is then given by $L \mapsto \tilde{f}^*( \overline{L}^* )$ \cite{BS13}. Choosing a hermitian structure on $L$ which is $\tilde{f}$-invariant, we have $L \simeq \tilde{f}^*(L)$, so the involution $\tilde{f}$ lifts to $L$ defining a class $[L] \in K^0_{\mathbb{Z}_2}(S)$. The class of $V$ in $K^0_{\mathbb{Z}_2}(\Sigma)$ is then given by the push-forward $[V] = p_*[L]$. As discussed previously the class of $[V]$ in equivariant $K$-theory is only well-defined modulo replacing the involution $\varphi : V \to V$ with $-\varphi$. This corresponds to the ambiguity in lifting $\tilde{f}$ to an involution on $L$. 

Using Theorem \ref{prop:eqpush} we can determine which classes in $K^0_{\mathbb{Z}_2}(\Sigma)$ lie in the image of the push-forward. Note that the push-forward depends on the topology of $S,\Sigma$ and the maps $\tilde{f},f$, but not the complex structures. The classes in the image of $p_*$ are the isomorphism classes of equivariant bundles which admit an equivariant flat connection with reductive holonomy and smooth spectral curve. The condition $[V] \in {\rm im}(p_*)$ may be interpreted as an analogue of the Milnor-Wood inequality for equivariant flat connections.\\

Finally, consider fixed points of the involution $i_3$. For a Higgs bundle $(V,\Phi)$ we have $i_3(V,\Phi) = (f^*(\overline{V}) , f^*(\overline{\Phi}))$. It follows that $h \circ i_2 = f \circ h$, so that fixed points of $i_3$ must lie over fixed points of $f : \mathcal{A}_{GL(m,\mathbb{C})} \to \mathcal{A}_{GL(m,\mathbb{C})}$. Let $a \in \mathcal{A}_{GL(m,\mathbb{C})}$ be a fixed point corresponding to a smooth spectral curve $p : S \to \Sigma$. As before we have an anti-holomorphic involution $\tilde{f} : S \to S$. Choosing an $f$-invariant spin structure $K^{1/2}$ we have that the action of $i_3$ on the fibre $Jac(S)$ is given by $L \mapsto \tilde{f}^*(\overline{L})$. Fixed points correspond to line bundles with $L \simeq f^*(\overline{L})$. Thus $L$ carries either a real or quaternionic structure which pushes down to a real or quaternionic structure on $V = p_*(L \otimes K^{(m-1)/2})$. The class of $V$ in $KR$- or $KH$-theory is then the push-forward $[V] = p_*[L]$. 

\begin{remark}
Given a spin structure $K^{1/2}$, recall that in \cite{Hit92} Hitchin constructed a section $s : \mathcal{A}_{GL(m,\mathbb{C})} \to \mathcal{M}_{GL(m,\mathbb{C})}$ of the Hitchin fibration which is invariant under $i_1$. Choosing $K^{1/2}$ with $f^*( \overline{K}^{1/2} ) \simeq K^{1/2}$, we have that $s$ is fixed by all three involutions $i_1,i_2,i_3$. This shows that the fixed point sets of $i_1,i_2,i_3$ are non-empty and contain smooth points.
\end{remark}

Using Theorem \ref{prop:krpush} we may determine which classes in $KR$-theory lie in the image of the push-forward. This gives a constraint on the topology of a real bundle $[V]$ to admit a real flat connection with reductive holonomy and smooth spectral curve. In this case the constraint for $[V]$ to lie in the image of $p_*$ is simple to state: for each fixed component $S^1 \subset \Sigma$ of $f$ for which $p^{-1}(S^1)$ contains no fixed point of $\tilde{f}$, we require $V|_{S^1}$ to be trivial as a real bundle.


\section{Fixed points as representations}\label{secrep}

In this section we give a representation theoretic description of the fixed point set of the involutions for any complex semisimple group $G$. The case of reductive groups is similar but one must replace the fundamental group by a central extension.


\subsection{Fixed points of $i_1$ and real holonomy}

A representation $\phi : \pi_1(\Sigma) \to G$ is called reductive if the induced action of $\pi_1(\Sigma)$ on $\mathfrak{g}$ by the adjoint representation decomposes into a direct sum of irreducible representations. Let ${\rm Hom}^+(\pi_1(\Sigma) , G)$ be the set of all reductive representations of $\pi_1(\Sigma)$ into $G$ given the compact-open topology. The group $G$ acts on this space by conjugation and the quotient ${\rm Hom}^+(\pi_1(\Sigma) , G)/G$ is Hausdorff \cite{ric}. As discussed in Section \ref{sec:hyper} there is a homeomorphism between the moduli space of polystable Higgs bundles and the character variety ${\rm Hom}^+( \pi_1(\Sigma) , G)/G$ of reductive representations. The correspondence sends a solution $(\overline{\partial}_A , \Phi )$ of the Hitchin equations to the monodromy representation of the associated connection $\nabla = \nabla_A + \Phi + \Phi^*$. We say that a reductive representation $\phi : \pi_1(\Sigma) \to G$ is simple if for all $g \in G$ such that $Ad_g \circ \phi = \phi$, we have $g \in Z(G)$.\\

Consider a basepoint $x_0 \in \Sigma$ and write $\pi_1(\Sigma)$ for $\pi_1(\Sigma , x_0)$. Let $\gamma$ be a path from $x_0$ to $f(x_0)$ and define $f_* : \pi_1(\Sigma) \to \pi_1(\Sigma)$ by $f_*(p) = \gamma . f(p) . \gamma^{-1}$. For $\mu = [\gamma . f(\gamma)] \in \pi_1(\Sigma)$ one has  $f_*^2(p) = Ad_\mu(p)$ and $f_*(\mu) = \mu$. In terms of conjugacy classes of representations $\phi : \pi_1(\Sigma) \to G$, the involutions $i_1,i_2,i_3$ take the form
\begin{equation*}
\begin{aligned}
i_1(\phi) & = \sigma \circ \phi, \\
i_2(\phi) &= \phi \circ f_*, \\
i_3(\phi) &= \sigma \circ \phi \circ f_*.
\end{aligned}
\end{equation*}
This description of the involutions makes the relation $i_1 i_2 = i_3$ especially clear.\\

Let $\sigma,\sigma'$ be anti-involutions of $G$. Following \cite{R12}, we say that $\sigma, \sigma'$ are inner equivalent if $\sigma' = Ad_h \circ \sigma$ for some $h \in G$. Note that it is possible for distinct real forms to be inner equivalent.
\begin{proposition}
Let $\phi : \pi_1 \to G$ be a simple fixed point of $i_1$. Then there exists an anti-involution $\sigma'$ inner equivalent to $\sigma$ such that $\sigma' \circ \phi = \phi$. Conversely if $\sigma'$ is inner equivalent to $\sigma$ and $\phi$ is a reductive representation with $\sigma' \circ \phi = \phi$, then $\phi$ is a fixed point of $i_1$.
\end{proposition}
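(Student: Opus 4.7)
The plan is to unravel the fixed-point condition in the character variety, and then use simplicity to adjust $\sigma$ by an inner automorphism so as to produce a genuine anti-involution fixing $\phi$ on the nose rather than only up to conjugation.

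First I would observe that since the moduli space is a quotient by $G$-conjugation, the hypothesis that $[\phi]$ is fixed by $i_1$ translates into the existence of an element $h\in G$ with $\sigma\circ\phi = Ad_h\circ\phi$. Applying $\sigma$ to both sides of this equation and using $\sigma^2=\mathrm{id}$ together with the fact that $\sigma$ is an anti-involution gives $\phi = Ad_{\sigma(h)\,h}\circ\phi$. By the simplicity hypothesis, the centraliser of the image of $\phi$ in $G$ reduces to $Z(G)$, hence $z := \sigma(h)\,h \in Z(G)$.

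Next I would set $\sigma' := Ad_{h^{-1}}\circ\sigma$, which is manifestly anti-holomorphic and inner equivalent to $\sigma$ in the sense of \cite{R12}. To verify that $\sigma'$ is actually an involution I would compute
\begin{equation*}
(\sigma')^2 = Ad_{h^{-1}}\sigma\, Ad_{h^{-1}}\sigma = Ad_{h^{-1}}\, Ad_{\sigma(h^{-1})}\,\sigma^2 = Ad_{h^{-1}\sigma(h)^{-1}} = Ad_{z^{-1}},
\end{equation*}
which is trivial because $z\in Z(G)$. Then the forward direction is completed by the direct check
\begin{equation*}
\sigma'\circ\phi = Ad_{h^{-1}}\circ\sigma\circ\phi = Ad_{h^{-1}}\circ Ad_h\circ\phi = \phi.
\end{equation*}

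For the converse, given $\sigma' = Ad_k\circ\sigma$ an anti-involution with $\sigma'\circ\phi = \phi$, I would simply rearrange to $\sigma\circ\phi = Ad_{k^{-1}}\circ\phi$, which exhibits $\sigma\circ\phi$ and $\phi$ as $G$-conjugate representations, so $[i_1(\phi)] = [\phi]$ in ${\rm Hom}^+(\pi_1(\Sigma),G)/G$; reductivity of $\phi$ is used only to ensure that the class $[\phi]$ is a point of the character variety. The main (minor) obstacle is the verification that the candidate $\sigma'$ is actually an involution rather than merely anti-holomorphic; this is where the simplicity hypothesis is essential, since without it one only obtains $\sigma(h)h$ in the full centraliser of $\phi$, which need not lie in $Z(G)$ and so could obstruct $(\sigma')^2=\mathrm{id}$.
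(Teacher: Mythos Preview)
Your proof is correct and follows essentially the same approach as the paper's: both derive $\sigma(h)h\in Z(G)$ from simplicity, set $\sigma' = Ad_{h^{-1}}\circ\sigma$, and check directly that $\sigma'\circ\phi=\phi$. You have in fact supplied more detail than the paper, which asserts without computation that $\sigma'$ is an involution; your explicit verification that $(\sigma')^2 = Ad_{z^{-1}} = \mathrm{id}$ fills this in.
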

\begin{proof}
If $\phi$ is a fixed point of $i_1$ then $\sigma \circ \phi = Ad_g \circ \phi$ for some $g \in G$. Applying $\sigma$ twice and using simplicity gives $\sigma(g)g \in Z(G)$. Let $\sigma' = Ad_{g^{-1}} \circ \sigma$. Then $\sigma'$ is an anti-holomorphic involution inner equivalent to $\sigma$. Clearly we have $\sigma' \circ \phi = \phi$. The proof of the converse is immediate.
\end{proof}

\begin{remark}
For example if we consider $GL(m,\mathbb{R}) \subset GL(m,\mathbb{C})$ then aside from $GL(m,\mathbb{R})$ itself the only inner equivalent real form is $GL(m/2,\mathbb{H})$ which occurs when $m$ is even.
\end{remark}

\begin{remark}
Properties of inner equivalent involutions in relation with Higgs bundles appear in \cite{R12} and are also currently being studied in \cite{current}.
\end{remark}

\subsection{Fixed points of $i_2,i_3$ and orbifold representations}

The fixed points of $i_2,i_3$ can be described in terms of the orbifold fundamental group $\pi_1^{\rm orb}(\Sigma)$ of $\Sigma$. This group is related to the usual fundamental group by a short exact sequence
\begin{equation}\label{equofg}
1 \to \pi_1(\Sigma) \to \pi_1^{\rm orb}(\Sigma) \buildrel \nu \over \to \mathbb{Z}_2 \to 1.
\end{equation}
To describe this group we consider separately the cases where $f$ has fixed points or not. If $f$ acts freely then the quotient $\Sigma' = \Sigma/f$ is a compact non-orientable surface. The orbifold fundamental group is simply the fundamental group of $\Sigma'$ and (\ref{equofg}) becomes the usual exact sequence for a double cover. When $f$ has fixed points, we take our basepoint $x_0$ to be a fixed point and $\gamma$ to be the constant path. In this case $f_*$ is an involution and $\pi_1^{{\rm orb}}(\Sigma)$ is the semi-direct product $\pi_1^{\rm orb}(\Sigma) = \mathbb{Z}_2 \ltimes \pi_1(\Sigma)$, where $\mathbb{Z}_2$ acts on $\pi_1(\Sigma)$ by $f_*$. In either case we may take the orbifold fundamental group to be $\mathbb{Z}_2 \times \pi_1(\Sigma)$ with product given by $(0,x)(0,y) = (0,xy)$, $(0,x)(1,e) = (1,x)$, $(1,e)(0,x) = (1,f_*(x))$, $(1,e)(1,e) = (0,\mu)$.\\

Let $c : \mathbb{Z}_2 \times \mathbb{Z}_2 \to \mathbb{Z}$ be given by $c(1,1) = 1$, and  $c(u,v) = 0$ otherwise. Then $c$ is a $2$-cocycle representing the non-trivial class in $H^2(\mathbb{Z}_2 , \mathbb{Z}) = \mathbb{Z}_2$. Let $\widetilde{\pi}_1^{\rm orb}(\Sigma)$ be the central extension of $\pi_1^{\rm orb}(\Sigma)$ by $\mathbb{Z}$ corresponding to $\nu^*(c) \in H^2( \pi_1^{\rm orb}(\Sigma) , \mathbb{Z})$. Observe that the extension class $\nu^*(c) \in H^2(\pi_1^{\rm orb}(\Sigma) , \mathbb{Z})$ is trivial when restricted to $\pi_1(\Sigma)$ so we obtain a short exact sequence
\begin{equation*}
1 \to \pi_1(\Sigma) \times \mathbb{Z} \to \widetilde{\pi}_1^{\rm orb}(\Sigma) \buildrel \tilde{\nu} \over \to \mathbb{Z}_2 \to 1.
\end{equation*}

Explicitly $\widetilde{\pi}_1^{\rm orb}(\Sigma)$ is the group generated by $\pi_1(\Sigma) \times \mathbb{Z}$ together with an element $y$ modulo the relations $yx = f_*(x)y$, $yz = zy$, $y^2 = z\mu$, where $x \in \pi_1(\Sigma)$ and $z$ is a generator of $\mathbb{Z}$.

\begin{proposition}
There is a bijection between conjugacy classes of representations \newline $\psi:\widetilde{\pi}_1^{\rm orb}(\Sigma) \to G$ and equivalence classes of pairs $(\phi , u)$, where $\phi : \pi_1(\Sigma) \to G$ is a representation, and $u \in G$ such that $\phi \circ f_* = Ad_{u} \circ \phi$. Two pairs $(\phi , u),(\phi',u')$ are equivalent if $\phi' = Ad_h \circ \phi$ and   $u' = Ad_h(u)$ for some $h \in G$.
\end{proposition}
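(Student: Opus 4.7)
The plan is to exploit the explicit presentation of $\widetilde{\pi}_1^{\rm orb}(\Sigma)$ given just before the proposition: it is generated by $\pi_1(\Sigma)$, a central element $z$, and an element $y$ subject to $yx = f_*(x)y$, $yz = zy$, and $y^2 = z\mu$. The natural assignment is $\psi \mapsto (\phi,u)$ with $\phi := \psi|_{\pi_1(\Sigma)}$ and $u := \psi(y)$. The relation $yx = f_*(x)y$ becomes $u\phi(x) = \phi(f_*(x))u$, which is precisely $\phi\circ f_* = Ad_u\circ\phi$, so this direction is immediate and requires no further work.

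The hard part is the inverse construction. Given a pair $(\phi,u)$ satisfying the compatibility, I would define $\psi$ on generators by $\psi|_{\pi_1(\Sigma)} = \phi$, $\psi(y) = u$, and---forced by the relation $y^2 = z\mu$---$\psi(z) = u^2\phi(\mu)^{-1}$. The main obstacle is verifying that this extends to a well-defined homomorphism, which reduces to checking that $\psi(z)$ is central in the image. From $f_*(\mu)=\mu$ and the compatibility one deduces $u\phi(\mu)^{-1} = \phi(\mu)^{-1}u$, so $\psi(z) = \phi(\mu)^{-1}u^2 = u^2\phi(\mu)^{-1}$, and in particular $\psi(z)$ commutes with $u$. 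To see it commutes with $\phi(x)$ for $x \in \pi_1(\Sigma)$, I would iterate the defining relation:
\[
u^2\phi(x)u^{-2} = \phi(f_*^2(x)) = \phi(\mu x\mu^{-1}) = \phi(\mu)\phi(x)\phi(\mu)^{-1},
\]
which rearranges to say that $\phi(\mu)^{-1}u^2$ commutes with $\phi(x)$, as needed. Once this is established, the two constructions are seen to be mutually inverse by direct inspection on generators.

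Finally, for the passage to equivalence classes, I would observe that conjugation $\psi'=Ad_h\circ\psi$ manifestly produces $\phi' = Ad_h\circ\phi$ and $u' = Ad_h(u)$; conversely, given any equivalent pair $(\phi',u') = (Ad_h\circ\phi, Ad_h(u))$, the representation $Ad_h\circ\psi$ built from $(\phi,u)$ coincides with the $\psi'$ built from $(\phi',u')$, since the defining formula $\psi(z) = u^2\phi(\mu)^{-1}$ is $Ad$-equivariant in its arguments. This yields the claimed bijection on equivalence classes.
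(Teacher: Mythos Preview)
Your proof is correct and follows the same approach as the paper, which merely states the forward assignment $\psi \mapsto (\psi|_{\pi_1(\Sigma)}, \psi(y))$ and declares it a bijection. You have supplied the details the paper omits: the inverse construction with the forced value $\psi(z) = u^2\phi(\mu)^{-1}$, the verification that this element is central in the image (using $f_*(\mu)=\mu$ and $f_*^2 = Ad_\mu$), and the compatibility of the two equivalence relations.
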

\begin{proof}
Given $\psi : \widetilde{\pi}_1^{\rm orb}(\Sigma) \to G$, we let $\phi$ be the restriction of $\psi$ to $\pi_1(\Sigma) \subset \widetilde{\pi}_1^{\rm orb}(\Sigma)$ and take $u = \psi(y)$. This gives the desired bijection.
\end{proof}
The above  establishes that fixed points of $i_2$ correspond to representations of $\widetilde{\pi}_1^{\rm orb}(\Sigma)$. On the other hand there is an obstruction to representing a fixed point of $i_2$ as a representation of $\pi_1^{\rm orb}(\Sigma)$. Suppose that $\phi$ is a simple point fixed by $i_2$. Then $\phi \circ f_* = Ad_u \circ \phi$ for some $u \in G$. Since $\phi$ is simple any two such $u$ differ by an element of $Z(G)$. Applying $f_*$ twice and using simplicity we find $c = u^2 \phi(\mu)^{-1} \in Z(G)$. In order to extend $\phi$ to a representation of $\pi_1^{\rm orb}(\Sigma)$ we need to find such a $u$ with $u^2 \phi(\mu)^{-1} = e$. Replacing $u$ by $uv$ for $v \in Z(G)$ we get $(uv)^2 \phi(\mu)^{-1} = cv^2$. Hence the obstruction to extending $\phi$ to an orbifold representation is a class in $Z(G)/2Z(G)$.\\

The case of fixed points of $i_3$ is similar. For this let $\widetilde{G}$ be the semi-direct product $\mathbb{Z}_2 \ltimes G$, where $\mathbb{Z}_2$ acts on $G$ by $\sigma$. Let $\pi : \widetilde{G} \to \mathbb{Z}_2$ be the projection.

\begin{proposition}
There is a bijection between conjugacy classes of representations \newline $\psi : \widetilde{\pi}_1^{\rm orb}(\Sigma) \to \widetilde{G}$ such that $\pi \circ \psi = \tilde{\nu}$ and equivalence classes of pairs $(\phi , u)$, where $\phi : \pi_1(\Sigma) \to G$ is a representation, and $u \in G$ such that $\sigma \circ \phi \circ f_* = Ad_{\sigma(u)} \circ \phi$. Two pairs $(\phi , u),(\phi',u')$ are equivalent if $\phi' = Ad_h \circ \phi$ and  $u' = h u \sigma(h)^{-1}$ for some $h \in G$.
\end{proposition}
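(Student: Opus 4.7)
The plan is to mirror the proof of the preceding proposition, with the semidirect product structure of $\widetilde{G} = \mathbb{Z}_2 \ltimes G$ accounting for the $\sigma$-twist. Writing elements of $\widetilde{G}$ as pairs $(\epsilon, g)$ with multiplication $(\epsilon, g)(\epsilon', g') = (\epsilon + \epsilon', g\,\sigma^\epsilon(g'))$, the condition $\pi \circ \psi = \tilde{\nu}$ forces $\psi(x) = (0, \phi(x))$ for $x \in \pi_1(\Sigma)$ and $\psi(y) = (1, u)$ for a unique $u \in G$. Setting $\phi = \psi|_{\pi_1(\Sigma)}$, I would translate the defining relation $yx = f_*(x)y$ by a direct computation in $\widetilde{G}$; it becomes $u\,\sigma(\phi(x)) = \phi(f_*(x))\,u$, which after applying $\sigma$ is equivalent to the stated condition $\sigma \circ \phi \circ f_* = Ad_{\sigma(u)} \circ \phi$.

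Conversely, given a pair $(\phi, u)$ satisfying this condition, I would define $\psi$ by the formulas above on $\pi_1(\Sigma)$ and $y$, and take $\psi(z) = (0, u\sigma(u)\phi(\mu)^{-1})$, as forced by the relation $y^2 = z\mu$. It then remains to verify that the defining relations $yz = zy$ and $zx = xz$ of $\widetilde{\pi}_1^{\rm orb}(\Sigma)$ are respected, i.e.\ that $\psi(z)$ commutes with $\psi(\pi_1(\Sigma))$ and with $\psi(y)$. Commutation with $\phi(\pi_1(\Sigma))$ is the one nontrivial point: applying the constraint twice yields two expressions for $\phi(f_*^2(x)) = \phi(\mu x \mu^{-1})$ whose comparison shows directly that $\phi(\mu)^{-1}u\sigma(u)$ commutes with every $\phi(x)$, and hence so does $u\sigma(u)\phi(\mu)^{-1}$. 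Commutation with $(1, u)$ then follows formally using $f_*(\mu) = \mu$.

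Finally, conjugating $\psi$ by an element $(0, h) \in G \subset \widetilde{G}$ sends $\phi$ to $Ad_h \circ \phi$ and sends $\psi(y) = (1, u)$ to $(0, h)(1, u)(0, h)^{-1} = (1, h u \sigma(h)^{-1})$, matching the stated equivalence. Restricting attention to $G$-conjugation is natural here: conjugation by a reflection $(1, h) \in \widetilde{G}$ would replace $\phi$ by $Ad_h \circ \sigma \circ \phi$, which is not of the form $Ad_{h'} \circ \phi$ in general. The main point to watch is that the proposition imposes no compatibility on $(\phi, u)$ beyond the single displayed identity, so the proof must derive the centrality of $\psi(z)$ automatically as above; that is the only subtlety, and the rest of the argument is routine bookkeeping.
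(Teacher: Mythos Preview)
Your proposal is correct and follows the same approach the paper uses: the paper actually gives no proof for this proposition, leaving it implicit as a direct analogue of the preceding one (whose proof is the two-line ``restrict $\psi$ to $\pi_1(\Sigma)$ and set $u = \psi(y)$''). Your write-up fills in exactly the details the paper omits, including the verification that $\psi(z)$ is forced and central, and your handling of the $\sigma$-twist via the semidirect product multiplication is the intended one.
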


It follows that every fixed point of $i_3$ extends to a representation of $\widetilde{\pi}_1^{\rm orb}(\Sigma)$. As in the case of $i_2$ we can consider the problem of extending a fixed point of $i_3$ to a representation of $\pi_1^{\rm orb}(\Sigma)$. Suppose $\phi$ is a simple fixed point of $i_3$, so $\sigma \circ \phi \circ f_* = Ad_{\sigma(u)} \circ \phi$ for some $u \in G$. Applying the involutions $\sigma$ and $f_*$ twice and using simplicity we obtain an element $c = u\sigma(u) \phi(\mu)^{-1} \in Z(G)$ such that $\sigma(c) = c$. Replacing $u$ by $uv$ for $v \in Z(G)$ we have $c \mapsto c v \sigma(v)$, so we obtain a class in $Z' = \{ c \in Z(G) \, | \, c = \sigma(c) \}/ \{ v \sigma(v) \, | \, v \in Z(G) \}$. This class is the obstruction to lifting $\phi$ to a representation $\widetilde{\phi} : \pi_1^{\rm orb}(\Sigma) \to \widetilde{G}$ with $\pi \circ \widetilde{\phi} = \nu$. 

For example when $G = GL(m,\mathbb{C})$ and $\sigma$ is conjugation we find $Z' = \mathbb{Z}_2$. In this case the trivial class in $Z'$ corresponds to real bundles and the non-trivial class to quaternionic bundles.


\section{Duality}\label{sec:duality}

Let $^L G$ be the Langlands dual group of $G$. There is a correspondence between invariant polynomials for $G$ and $^L G$ giving an identification $\mathcal{A}_G \simeq \mathcal{A}_{^L G}$. The moduli spaces $\mathcal{M}_G,\mathcal{M}_{^L G}$ are then torus fibrations over a common base and their non-singular fibres are dual abelian varieties \cite{dopa}. Kapustin and Witten give a physical interpretation of this in terms of S-duality, using it as the basis for their approach to the geometric Langlands program \cite{Kap}. In this approach a crucial role is played by the various types of branes and their transformation under mirror symmetry. This duality exchanges branes according to $(B,B,B) \leftrightarrow (B,A,A)$, $(A,B,A) \leftrightarrow (A,B,A)$, $(A,A,B) \leftrightarrow (A,A,B)$. We consider here the question of how this duality acts on the fixed point sets of the real structures $i_1,i_2,i_3$. We make some conjectures without attempting to give rigorous justifications. We let $\mathcal{B}_G^a \subset \mathcal{M}_G$ denote the fixed point set of $i_a$ for $a = 1,2,3$.\\

The simplest case is the fixed point set of $i_2$, which is of type $(A,B,A)$. Since the definition of $i_2$ requires only the choice of anti-holomorphic involution $f$, we have a corresponding involution $\hat{i}_2$ on $\mathcal{M}_{^L G}$ with fixed point set an $(A,B,A)$-brane $\widehat{\mathcal{B}}^2_{^L G} \subset \mathcal{M}_{^L G}$. We conjecture that $\widehat{\mathcal{B}}^2_{^L G}$ is the dual brane to $\mathcal{B}^2_G$. We give some evidence for this in \cite{BS13}.\\

Consider next the $(A,A,B)$-brane $\mathcal{B}_G^3 \subset \mathcal{M}_G$. Since the dual brane $\widehat{\mathcal{B}}^3_{^L G} \subset \mathcal{M}_{^L G}$ is also of type $(A,A,B)$ one might conjecture that it is the fixed point set of a corresponding involution $\hat{i}_3$. To define $\hat{i}_3$ we need to choose a real structure $\hat{\sigma}$ on $^L G$. If $G$ is a simple group not of type $B_n,C_n$, the Lie algebras of $G$ and $^L G$ coincide and we have a natural choice for $\hat{\sigma}$. The $B_n,C_n$ cases however remain a mystery.\\

The most interesting case is the $(B,A,A)$-brane $\mathcal{B}_G^1 \subset \mathcal{M}_G$. The dual $\widehat{\mathcal{B}}^1_{^L G} \subset \mathcal{M}_{^L G}$ must be of type $(B,B,B)$, a submanifold which is complex with respect to $I,J,K$. One natural way of constructing $(B,B,B)$-branes in $\mathcal{M}_{^L G}$ is to take a complex subgroup $H \subset {^L G}$ and to let $\widehat{\mathcal{B}}_{^L G}^1$ be the space of $^L G$-Higgs bundles with holonomy in $H$. It remains to find a natural choice of subgroup $H$. In \cite{nad} a correspondence between real structures on $G$ and complex subgroups of the dual group $^L G$ is given. We conjecture that the correspondence given in \cite{nad} determines the correct dual brane to $\mathcal{B}_G^1$. Some evidence for this duality in the case $U(m,m) \subset GL(2m,\mathbb{C}) \leftrightarrow Sp(2m,\mathbb{C}) \subset GL(2m,\mathbb{C})$ has been shown by Hitchin in \cite{Char}.



\end{document}